\documentclass[a4paper,reqno]{amsart}
\usepackage{geometry}
\title[A refinement of the Pontryagin-Thom theorem for unstable Thom spectra and its applications]{A refinement of the Pontryagin-Thom theorem for unstable Thom spectra and its applications}
\author{Naoki Kuroda}
\makeatletter
\@namedef{subjclassname@2020}{\textup{2020} Mathematics Subject Classification}
\makeatother
\subjclass[2020]{57R90, 55P42}
\keywords{cobordism group, spectrum}
\address{GRADUATE SCHOOL OF MATHEMATICAL SCIENCES, THE
UNIVERSITY OF TOKYO, 3-8-1 KOMABA, MEGURO-KU, TOKYO, 153-8914,
JAPAN}
\email{kuronao0402@g.ecc.u-tokyo.ac.jp}
\usepackage{amsmath}
\usepackage{amssymb}
\usepackage{amsfonts}
\usepackage{mathrsfs}
\usepackage{amsthm}
\usepackage{extarrows}
\usepackage{bm}
\usepackage[dvipsnames]{xcolor}
\usepackage[pdftex]{graphicx}
\usepackage{wrapfig}
\usepackage[numbers]{natbib}
\usepackage{color}
\usepackage[labelsep=colon]{caption}
\usepackage[labelformat=empty,subrefformat=parens]{subcaption}
\usepackage{multicol}
\usepackage[all]{xy}
\usepackage{tikz-cd}
\usepackage{booktabs}
\usepackage[pdftex]{hyperref}
\usepackage{cleveref}
\usepackage{pgfplots}
\pgfplotsset{compat=1.15}
\usepackage{tikz}
\usetikzlibrary{arrows.meta, decorations.pathmorphing, positioning}
\usepackage{tikz-cd}

\newtheorem{thm}{Theorem}[section]

\newtheorem{lemma}[thm]{Lemma}

\newtheorem*{mthm*}{Main Theorem}
\newtheorem*{thm*}{Theorem}
\newtheorem*{prop*}{Proposition}
\theoremstyle{definition}
\newtheorem{defi}[thm]{Definition}

\newtheorem{remark}[thm]{Remark}

\theoremstyle{plain}
\newtheorem{innercustomthm}{Theorem}
\newenvironment{customthm}[1]
  {\renewcommand\theinnercustomthm{#1}\innercustomthm}
  {\endinnercustomthm}

\newcommand{\R}{{\mathbb R}}

\newcommand{\Z}{{\mathbb Z}}

\newcommand{\Ker}{\textrm{Ker}}

\newcommand{\Spin}{\mathrm{Spin}}
\newcommand{\Pin}{\mathrm{Pin}}

\newcommand{\bS}{\mathbb{S}}

\begin{document}
\begin{abstract}
The Pontryagin-Thom construction provides a fundamental link between cobordism groups and the homotopy groups of Thom spectra. Our main result refines this theorem, providing a more explicit geometric interpretation of the homotopy groups of unstable Thom spectra.
Building on this result, we show that previously unknown cobordism groups can be expressed as homotopy groups of unstable Thom spectra. Furthermore, using the Smith homomorphism, we compute these groups.
As applications, we determine the values of $n$ for which there exists a Spin manifold with boundary $S^n$ admitting a line subbundle orthogonal to the boundary, and provide a precise characterization of the cobordism group introduced by Bais, May Custodio, and Torres.
\end{abstract}
\maketitle
\section{Introduction}

In topology, the classification of manifolds is a fundamental and important problem. The cobordism group, introduced by Thom, is one of the classical objects of this study.
By the Pontryagin-Thom construction, this cobordism group is known to be isomorphic to the homotopy groups of the corresponding Thom spectrum \cite{Thom54}.

An analogue of Thom's cobordism group was introduced by Reinhart, defined by a stronger equivalence relation \cite{Reinhart63}.
Reinhart determined this group by using a natural surjection onto the Thom's original cobordism group. 
Later, Ebert \cite[Appendix A]{Ebert13} noted that this group is also isomorphic to the homotopy groups of a corresponding unstable Thom spectrum and, by using a long exact sequence of homotopy groups, obtained the same result as Reinhart.

The theorem used by Ebert \cite{Ebert13} is the following, which summarizes \cite[Chapter IV, \S 7]{Rudyak98}. Before stating the theorem, we define the relevant cobordism group. 

Henceforth, we denote the Thom spectrum of a virtual vector bundle $V \to X$ by $X^V$.

\begin{defi}\label{def:cobordism_group}
Let $V \to X$ be a virtual vector bundle of rank $n \in \Z_{\geq 0}$.
We define the monoid denoted by $\Omega(X; V)$, as the set of equivalence classes of triples $(M, g, \Phi)$ consisting of:
\begin{itemize}
    \item a closed $n$-manifold $M$,
    \item a continuous map $g: M \to X$, and
    \item a stable vector bundle isomorphism $\Phi: TM \cong g^{*} V$ (i.e., there exists a sufficiently large positive integer $N$ and a bundle isomorphism $\Phi: TM \oplus \mathbb{R}^N \cong g^ V \oplus \mathbb{R}^N$).
\end{itemize}

The equivalence relation is defined as follows.
A direct cobordism between two triples $(M_0,g_0,\Phi_0)$ and $(M_1,g_1,\Phi_1)$ is a triple $(N, h, \Psi)$ consisting of:
\begin{itemize}
    \item a compact $(n+1)$-manifold $N$ which is a bordism from $M_0$ to $M_1$,
    \item a continuous map $h:N \to X$ such that $h|_{M_i} = g_i$, and
    \item a stable bundle isomorphism $\Psi: TN \cong h^{*} V\oplus \R$ whose restriction to $M_i$ coincides with the isomorphism
    \[ TN|_{M_i} \cong TM_i\oplus \R \stackrel{\Phi_i}{\cong} g_{i}^{*} V\oplus \R. \]
\end{itemize}
Here, the isomorphism $TM_i \oplus \mathbb{R} \cong TN$ identifies the trivial bundle with the outward normal vector for $i=0$ and the inward normal vector for $i=1$.

Since this direct cobordism relation is reflexive and transitive but not necessarily symmetric, we define the equivalence relation on these triples to be the one generated by the direct cobordism relation. That is, two triples are equivalent if they can be connected by a finite sequence of direct cobordisms forming a ``zigzag''.

Here, the addition in the monoid is given by the disjoint union of manifolds.
\end{defi}

\begin{thm}\label{thm:Pontryagin-Thom for spectrum}
Let $V \to X$ be a virtual vector bundle of rank $n \in \Z_{\geq 0}$.
Then, there is a natural isomorphism
\[ \Omega(X; V) \cong \pi_0 (X^{-V}). \]
\end{thm}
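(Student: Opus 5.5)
We would prove this by the classical Pontryagin--Thom argument, adapted to a virtual bundle of rank $n$. First we make the target concrete. Write $V = E - \R^{k}$ with $E \to X$ an honest vector bundle of rank $n+k$. We may reduce to $X$ compact, or a CW complex filtered by finite subcomplexes, and pass to colimits at the end. We also choose a complement $E^\perp$ with $E \oplus E^\perp \cong \R^{L}$. Then $-V = E^\perp - \R^{L-n-k}$, and
\[ \pi_0(X^{-V}) = \operatorname{colim}_{L}\, \pi_{L-n}\bigl(\mathrm{Th}(E^\perp)\bigr), \]
with $E^\perp$ of rank $L-n-k$; the stabilization is over increasing $L$, with $E^\perp$ replaced by $E^\perp\oplus\R$.

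\textbf{Map $\Omega(X;V) \to \pi_0(X^{-V})$.} Take a triple $(M,g,\Phi)$. Embed $M \subset \R^{L-n}$ for $L$ large, with normal bundle $\nu$, so that $TM \oplus \nu \cong \R^{L-n}$. Stabilizing $\Phi$ gives $TM \oplus \R^{k} \cong g^*E$. After adding trivial summands and increasing $L$, this yields a stable isomorphism $\nu \cong g^*E^\perp$. By the stable range (dimension of $M$ small compared with the ranks) it can be realized as an actual bundle map $\nu \to E^\perp$ covering $g$, well defined up to homotopy. The Pontryagin--Thom collapse $S^{L-n} \to \mathrm{Th}(\nu) \to \mathrm{Th}(E^\perp)$ then gives a class in $\pi_0(X^{-V})$.

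We check this class is independent of the choices: the embedding (any two are isotopic once $L$ is large), the tubular neighbourhood, and the destabilization of $\Phi$. We then check invariance under a direct cobordism $(N,h,\Psi)$. Embed $N$ in $\R^{L-n}\times[0,1]$ meeting the ends orthogonally along $M_0$ and $M_1$. The stable isomorphism $\Psi\colon TN\cong h^*V\oplus\R$ again gives a normal bundle map, now to $E^\perp$, because the extra $\R$ is absorbed by the interval direction. The boundary conventions in Definition~\ref{def:cobordism_group}, outward normal at $M_0$ and inward normal at $M_1$, are exactly what makes the restriction to each end agree with $\Phi_i$ after identifying the interval coordinate with the normal. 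The collapse over $N$ is therefore a homotopy between the two collapse maps. Since the relation is generated by direct cobordisms, the map descends to equivalence classes. Disjoint union corresponds to the sum in $\pi_{L-n}$, so the map is a monoid homomorphism. This also shows that $\Omega(X;V)$ is a group, since $\pi_0$ is one.

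\textbf{Inverse.} Given $f\colon S^{L-n}\to\mathrm{Th}(E^\perp)$, where we may take $X$ to be a manifold or approximate by one, we make $f$ transverse to the zero section. The preimage $M$ is a closed $n$-manifold with normal bundle pulled back from $E^\perp$. The complement relation $E\oplus E^\perp\cong\R^L$ turns this into a stable isomorphism $TM\cong g^*V$. A homotopy, made transverse, produces a cobordism $N\subset S^{L-n}\times[0,1]$ with the required $\Psi$, and this is a direct cobordism in one direction. The two constructions are mutually inverse up to homotopy and up to equivalence respectively, by the standard argument that the collapse of a transverse preimage is homotopic to the original map. Naturality in $(X,V)$ is clear from the construction.

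\textbf{Main obstacle.} The hard step is bookkeeping the stable isomorphisms, not transversality. We must show that the stable framing data $\Phi$ (tangential, with the $\R^N$ stabilization) corresponds bijectively to normal $E^\perp$-structures up to homotopy, compatibly with stabilization in $L$. We must also show that the boundary normal conventions make the cobordism relation match homotopy exactly, with the correct orientation of the interval, which is where the asymmetric direct-cobordism relation enters. We would handle this by fixing once and for all a canonical identification $T\R^{L-n}|_M = TM\oplus\nu$, and checking on the cylinder $M\times[0,1]$ that reflexivity gives the constant homotopy.
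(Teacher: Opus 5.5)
Your strategy is the one the paper has in mind. The paper does not prove this statement itself; it attributes it to Rudyak and describes the map as the collapse $\Sigma^\infty\bS^0\to M^{-TM}$ followed by the map $M^{-TM}\to X^{-V}$ induced by $(g,\Phi)$, with the inverse given by transversality. Your embed--collapse--map to $\mathrm{Th}(E^\perp)$ argument is the finite-dimensional form of exactly this. Your outline of well-definedness, cobordism invariance (including the boundary-normal convention) and the inverse is fine.

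There is, however, a concrete error in your degree bookkeeping, and as written the key step fails.
\begin{itemize}
\item From $E\oplus E^\perp\cong\R^L$ you get $-E=E^\perp-\R^L$, hence $-V=E^\perp-\R^{L-k}$. This has rank $(L-n-k)-(L-k)=-n$, as it must; your $E^\perp-\R^{L-n-k}$ has rank $0$.
\item Therefore $X^{-V}\simeq\Sigma^{-(L-k)}\Sigma^\infty\mathrm{Th}(E^\perp)$ and $\pi_0(X^{-V})=\operatorname{colim}_L\pi_{L-k}(\mathrm{Th}(E^\perp))$. Your $\pi_{L-n}(\mathrm{Th}(E^\perp))$ instead computes $\pi_{k-n}(X^{-V})$.
\item Correspondingly, $M$ must be embedded in $\R^{L-k}$. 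Then $\nu$ has rank $L-k-n=\operatorname{rank}E^\perp$. Moreover $\nu\oplus g^*E$ is stably isomorphic to $\nu\oplus TM\oplus\R^k\cong\R^L\cong g^*E^\perp\oplus g^*E$, so $\nu$ is stably isomorphic to $g^*E^\perp$. Since the ranks agree and are large compared with $n$, this destabilizes to a bundle isomorphism $\nu\cong g^*E^\perp$ covering $g$, unique up to homotopy.
\item With your embedding $M\subset\R^{L-n}$, $\nu$ has rank $L-2n$, which differs from $L-n-k$ unless $k=n$. So the bundle map $\nu\to E^\perp$ you invoke does not exist, and the collapse lands in the wrong degree.
\item The same shift is needed for the cobordism ($N\subset\R^{L-k}\times[0,1]$) and for the inverse (start from $f\colon S^{L-k}\to\mathrm{Th}(E^\perp)$).
\end{itemize}
With this correction the argument goes through. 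Note that the paper's spectrum-level formulation avoids the complement $E^\perp$ and the destabilization step entirely, since $\Phi$ itself induces $M^{-TM}\to X^{-V}$.
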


The correspondence is as follows.
Given a triple $[M,g,\Phi]$, the corresponding element in $\pi_0 (X^{-V})$ is the composition

\[
\Sigma^{\infty} \bS^0 \xrightarrow{\operatorname{PT}_M} M^{-TM} \xrightarrow{g, \Phi} X^{-V}
\]

The converse correspondence is given by Thom's transversality theorem.

This theorem is highly significant in that it translates the Thom spectrum into the language of manifolds.
However, it has the disadvantage that the ``stable vector bundle isomorphism'' appearing in the equivalence relation is difficult to handle.
We refine this theorem and express the equivalence relation in a more explicit form.

The relevant cobordism group for our first main result is defined as follows.

\begin{defi}\label{def:cobordism_group_A}
Let $m$ be a positive integer, $n$ be a non-negative integer, and let $\xi \to X$ be a vector bundle of rank $n+m$. 
We define the monoid denoted by $\Omega'(X; \xi, m)$, as the set of equivalence classes of triples $(M, g, \Phi)$ consisting of:
\begin{itemize}
    \item a closed smooth $n$-manifold $M$,
    \item a continuous map $g: M \to X$, and
    \item a bundle isomorphism $\phi: TM \oplus \R^m \cong g^{*} \xi$.
\end{itemize}

The equivalence relation is defined as follows. A direct cobordism between two triples $(M_0,g_0,\phi_0)$ and $(M_1,g_1,\phi_1)$ is a triple $(N, h, \psi)$ consisting of:
\begin{itemize}
    \item a compact $(n+1)$-manifold $N$ which is a bordism from $M_0$ to $M_1$,
    \item a continuous map $h:N \to X$ such that $h|_{M_i} = g_i$, and
    \item a bundle isomorphism $\psi: TN \oplus \R^{m-1} \cong h^{*} \xi$ whose restriction to $M_i$ coincides with the isomorphism
    \[ TN \oplus \R^{m-1} |_{M_i} \cong TM_i \oplus \R^m \stackrel{\phi_i}{\cong}  g_i^* \xi. \]
\end{itemize}
Here, the isomorphism $TM_i \oplus \mathbb{R} \cong TN$ identifies the trivial bundle with the outward normal vector for $i=0$ and the inward normal vector for $i=1$.

Since this direct cobordism relation is reflexive and transitive but not necessarily symmetric, we define the equivalence relation on these triples to be the one generated by the direct cobordism relation. That is, two triples are equivalent if they can be connected by a finite sequence of direct cobordisms forming a ``zigzag''.

Here, the addition in the monoid is given by the disjoint union of manifolds.
\end{defi}

\begin{customthm}{A}\label{thm:A}
Let $m$ be a positive integer, $n$ be a non-negative integer, and let $\xi \to X$ be a vector bundle of rank $n+m$. 
Then, the natural monoid homomorphism 
\[ \lambda:\Omega'(X; \xi, m) \to \Omega(X; \xi - \mathbb{R}^m) \]
defined by $[M, g, \phi] \mapsto [M, g, \phi]$ induces an isomorphism
\[ \Omega'(X; \xi, m) \xrightarrow{\cong} \pi_{-m} (X^{-\xi}). \]

Moreover, the direct cobordism relation defined in Definition~\ref{def:cobordism_group_A} is an equivalence relation except when $n=1$.
\end{customthm}

Furthermore, Ebert \cite{Ebert13} succeeded in obtaining information about the homotopy groups using a fibration of spectra, in which the homotopy groups of the corresponding Thom spectrum appear. On the other hand, the Smith homomorphism is also known as a method for constructing fibrations of Thom spectra \cite{Detal26}.
In this paper, we use this Smith homomorphism to determine previously unknown cobordism groups.
We begin by presenting three $3\times 3$ commutative diagrams involving the Smith homomorphism used in this study.

\begin{customthm}{B1}\label{thm:B1}
Let $n$ be a positive integer, $L^O_n$ be the canonical rank $n$ vector bundle over $BO(n)$, and $L^{SO}_n$ be the canonical rank $n$ oriented vector bundle over $BSO(n)$.
Then, we obtain the commutative diagram

\begin{equation*}
\begin{tikzcd}
BSO(n)^{-(L^{SO}_n\oplus \R^2)} \arrow[r] \arrow[d] & BO(n)^{-(L^{O}_n\oplus \mathrm{det} L^{O}_n\oplus \R)} \arrow[r] \arrow[d] & BO(n)^{-(L^{O}_n\oplus \R)}\arrow[d] \\
BSO(n+1)^{-(L^{SO}_{n+1}\oplus \R)} \arrow[r] \arrow[d] & BO(n+1)^{-(L^{O}_{n+1}\oplus \mathrm{det} L^{O}_{n+1})} \arrow[r] \arrow[d] & BO(n+1)^{-(L^{O}_{n+1}\oplus \R)} \arrow[d] \\
BSO(n+1)^{-\R} \arrow[r] & BO(n+1)^{-\mathrm{det} L^{O}_{n+1}} \arrow[r] & BO(n+1)^{0}
\end{tikzcd}
\end{equation*}
where every row and every column is a Smith homomorphism of certain Thom spectra.
\end{customthm}

\begin{customthm}{B2}\label{thm:B2}
Let $n$ be a positive integer, $L^{\Spin}_n$ be the canonical rank $n$ spin vector bundle over $B\Spin(n)$, and $L^{\Pin^{-}}_n$ be the canonical rank $n$ $\Pin^{-}$ vector bundle over $B\Pin^{-}(n)$.
Then, we obtain the commutative diagram

\begin{equation*}
\begin{tikzcd}
B\Spin(n)^{-(L^{\Spin}_n\oplus \R^2)} \arrow[r] \arrow[d] & B\Pin^{-}(n)^{-(L^{\Pin^{-}}_n\oplus \mathrm{det} L^{\Pin^{-}}_n\oplus \R)} \arrow[r] \arrow[d] & B\Pin^{-}(n)^{-(L^{\Pin^{-}}_n\oplus \R)}\arrow[d] \\
B\Spin(n+1)^{-(L^{\Spin}_{n+1}\oplus \R)} \arrow[r] \arrow[d] & B\Pin^{-}(n+1)^{-(L^{\Pin^{-}}_{n+1}\oplus \mathrm{det} L^{\Pin^{-}}_{n+1})} \arrow[r] \arrow[d] & B\Pin^{-}(n+1)^{-(L^{\Pin^{-}}_{n+1}\oplus \R)} \arrow[d] \\
B\Spin(n+1)^{-\R} \arrow[r] & B\Pin^{-}(n+1)^{-\mathrm{det} L^{\Pin^{-}}_{n+1}} \arrow[r] & B\Pin^{-}(n+1)^{0}
\end{tikzcd}
\end{equation*}

where every row and every column is a Smith homomorphism of certain Thom spectra.
\end{customthm}

\begin{customthm}{B3}\label{thm:B3}
Let $n$ be a positive integer, $L^{\Spin}_n$ be the canonical rank $n$ Spin vector bundle over $B\Spin(n)$, $L^{\Spin^c}_n$ be the canonical rank $n$ $\Spin^c$ vector bundle over $B\Spin^c(n)$, and $\mathcal{O}_n$ be the pullback of the canonical complex line bundle via the natural map $B\mathrm{Spin}^c(n) \to BSO(n) \times BU(1) \xrightarrow{pr_2} BU(1)$. Then, we obtain the commutative diagram

\begin{equation*}
\begin{tikzcd}
B\Spin(n)^{-(L^{\Spin}_n\oplus \R^3)} \arrow[r] \arrow[d] & B\Spin^c(n)^{-(L^{\Spin^c}_n\oplus \mathcal{O}_n\oplus \R)} \arrow[r] \arrow[d] & B\Spin^c(n)^{-(L^{\Spin^c}_n\oplus \R)}\arrow[d] \\
B\Spin(n+1)^{-(L^{\Spin}_{n+1}\oplus \R^2)} \arrow[r] \arrow[d] & B\Spin^c(n+1)^{-(L^{\Spin^c}_{n+1}\oplus \mathcal{O}_{n+1})} \arrow[r] \arrow[d] & B\Spin^c(n+1)^{-(L^{\Spin^c}_{n+1}\oplus \R)} \arrow[d] \\
B\Spin(n+1)^{-\R^2} \arrow[r] & B\Spin^c(n+1)^{-\mathcal{O}_{n+1}} \arrow[r] & B\Spin^c(n+1)^{0}
\end{tikzcd}
\end{equation*}
where every row and every column is a Smith homomorphism of certain Thom spectra.
\end{customthm}

Next, we define three cobordism groups that have not been previously explored and use the mentioned three diagrams of the Smith homomorphism to extract information regarding their group structures. The three new groups addressed in this study are defined as follows.

\begin{defi}\label{def:oriented line_cobordism}
Let $n$ be a non-negative integer. We define the monoid $\Omega^{\text{SO, line}}_n$ as the set of equivalence classes of pairs $(M, \eta)$ consisting of:
\begin{itemize}
    \item a closed oriented $n$-manifold $M$, and
    \item a line subbundle $\eta \subset TM \oplus \R$.
\end{itemize}

The equivalence relation is defined as follows. A direct cobordism between two pairs $(M_0, \eta_0)$ and $(M_1, \eta_1)$ is a pair $(N, \zeta)$ consisting of:
\begin{itemize}
    \item a compact oriented $(n+1)$-manifold $N$ which is a bordism from $M_0$ to $M_1$, and
    \item a line subbundle $\zeta \subset TN$ whose restriction to $M_i$ coincides with $\eta_i$ under the identification $TN|_{M_i} \cong TM_i \oplus \R$.
\end{itemize}
Here, the isomorphism $TM_i \oplus \R \cong TN|_{M_i}$ identifies the trivial bundle with the outward normal vector for $i=0$ and the inward normal vector for $i=1$.

This direct cobordism relation is an equivalence relation except when $n=1$. In the case $n=1$, we define two elements to be equivalent if they can be connected by a finite sequence of direct cobordisms, forming a ``zigzag''. 

Here, the addition in the monoid is given by the disjoint union of manifolds.
\end{defi}

\begin{defi}\label{def:Spin line_cobordism}
Let $n$ be a non-negative integer. We define the monoid $\Omega^{\text{Spin, line}}_n$ as the set of equivalence classes of pairs $(M, \eta)$ consisting of:
\begin{itemize}
    \item a closed Spin $n$-manifold $M$, and
    \item a line subbundle $\eta \subset TM \oplus \R$.
\end{itemize}

The equivalence relation is defined as follows. A direct cobordism between two pairs $(M_0, \eta_0)$ and $(M_1, \eta_1)$ is a pair $(N, \zeta)$ consisting of:
\begin{itemize}
    \item a compact Spin $(n+1)$-manifold $N$ which is a bordism from $M_0$ to $M_1$, and
    \item a line subbundle $\zeta \subset TN$ whose restriction to $M_i$ coincides with $\eta_i$ under the identification $TN|_{M_i} \cong TM_i \oplus \R$.
\end{itemize}
Here, the isomorphism $TM_i \oplus \R \cong TN|_{M_i}$ identifies the trivial bundle with the outward normal vector for $i=0$ and the inward normal vector for $i=1$.

This direct cobordism relation is an equivalence relation except when $n=1$. In the case $n=1$, we define two elements to be equivalent if they can be connected by a finite sequence of direct cobordisms, forming a ``zigzag''. 

Here, the addition in the monoid is given by the disjoint union of manifolds.
\end{defi}

\begin{defi}\label{def:Spin plane_cobordism}
Let $n$ be a non-negative integer. We define the monoid $\Omega^{\text{Spin, plane}}_n$ as the set of equivalence classes of pairs $(M, \eta)$ consisting of:
\begin{itemize}
    \item a closed Spin $n$-manifold $M$, and
    \item a oriented rank 2 subbundle $\eta \subset TM \oplus \R^2$.
\end{itemize}

The equivalence relation is defined as follows. A direct cobordism between two pairs $(M_0, \eta_0)$ and $(M_1, \eta_1)$ is a pair $(N, \zeta)$ consisting of:
\begin{itemize}
    \item a compact Spin $(n+1)$-manifold $N$ which is a bordism from $M_0$ to $M_1$, and
    \item a oriented rank 2 subbundle $\zeta \subset TN\oplus \R$ whose restriction to $M_i$ coincides with $\eta_i$ under the identification $TN|_{M_i} \oplus \R \cong TM_i \oplus \R^2$.
\end{itemize}
Here, the isomorphism $TM_i \oplus \R \cong TN|_{M_i}$ identifies the trivial bundle with the outward normal vector for $i=0$ and the inward normal vector for $i=1$.

This direct cobordism relation is an equivalence relation except when $n=1$. In the case $n=1$, we define two elements to be equivalent if they can be connected by a finite sequence of direct cobordisms, forming a ``zigzag''. 

Here, the addition in the monoid is given by the disjoint union of manifolds.
\end{defi}

\begin{customthm}{C1}\label{thm:C1}
Let $n$ be a non-negative integer. There is a natural isomorphism
\[ \Omega^{\textnormal{SO, line}}_n \cong \Omega'(BO(n); L^{O}_n\oplus \mathrm{det} L^{O}_n, 1) \cong \pi_{-1}(BO(n)^{-(L^{O}_n\oplus \mathrm{det} L^{O}_n)}). \]
Furthermore, there exists a natural surjective group homomorphism
\[ \epsilon : \Omega^{\textnormal{SO, line}}_n \to \Omega^O_{n-1} \oplus \Omega^{SO}_{n} \]
defined by $\epsilon([M, \eta]) = ([s^{-1}(0)], [M])$, where $s$ is a generic section of $\eta$ transverse to the zero-section. The kernel of $\epsilon$ is given by
\[ \Ker \, \epsilon \cong
\begin{cases}
  \Z/2\Z, & \text{if } n \equiv 1 \pmod{4} \\
  0,        & \text{if } n \equiv 0, 2, 3 \pmod{4}
\end{cases}. \]
\end{customthm}

\begin{customthm}{C2}\label{thm:C2}
Let $n$ be a non-negative integer. There is a natural isomorphism
\[ \Omega^{\textnormal{Spin, line}}_n \cong \Omega'(B\Pin^{-}(n); L^{\Pin^{-}}_n\oplus \mathrm{det} L^{\Pin^{-}}_n, 1) \cong \pi_{-1}(B\Pin^{-}(n)^{-(L^{\Pin^{-}}_n\oplus \mathrm{det} L^{\Pin^{-}}_n)}). \]
Furthermore, there exists a natural surjective group homomorphism
\[ \epsilon : \Omega^{\textnormal{Spin, line}}_n \to \Omega^{\Pin^{-}}_{n-1} \oplus \Omega^{\Spin}_{n} \]
defined by $\epsilon([M, \eta]) = ([s^{-1}(0)], [M])$, where $s$ is a generic section of $\eta$ transverse to the zero-section. The kernel of $\epsilon$ is given by
\[ \Ker \, \epsilon \cong
\begin{cases}
  \Z/2\Z, & \text{if } n \equiv 1, 3, 4, 5  \pmod{8} \\
  0,       & \text{if } n \equiv 0, 2, 6, 7 \pmod{8}
\end{cases}. \]
\end{customthm}

\begin{customthm}{C3}\label{thm:C3}
Let $n$ be a non-negative integer. There is a natural isomorphism
\[ \Omega^{\textnormal{Spin, plane}}_n \cong \Omega'(B\textnormal{Spin}^c(n);L^{\textnormal{Spin}^c}_{n}\oplus \mathcal{O}_{n}, 2) \cong \pi_{-2}(B\Spin^c(n)^{-(L^{\Spin^c}_{n}\oplus \mathcal{O}_{n})}). \]
Furthermore, there exists a natural surjective group homomorphism
\[ \epsilon : \Omega^{\textnormal{Spin, plane}}_n \to \Omega^{\textnormal{Spin}^c}_{n-2} \oplus \Omega^{\textnormal{Spin}}_{n} \]
defined by $\epsilon([M, \eta]) = ([s^{-1}(0)], [M])$, where $s$ is a generic section of $\eta$ transverse to the zero-section. The kernel of $\epsilon$ is given by
\[ \Ker \, \epsilon \cong
\begin{cases}
  \Z, & \text{if } n \equiv 0, 2, 4, 6 \pmod{8}, \\
  \Z/2\Z, & \text{if } n \equiv 3 \pmod{8}, \\
  0,        & \text{if } n \equiv 1, 5, 7 \pmod{8}.
\end{cases} \]
\end{customthm}

As corollaries of our main theorems, we present two applications.
The first application determines the non-negative integers $n$ for which there exists a pair $(M, \eta)$ consisting of an $(n+1)$-dimensional compact Spin manifold $M$ with boundary $S^n$ and a line subbundle $\eta$ of $TM$ such that $\eta$ is orthogonal to the boundary on $S^n$.
The second application provides an alternative proof of the main result in \cite{BCT24}.
In \cite[Section 6]{BCT24}, Question 2 asks for a description of the cobordism group defined in \cite{BCT24} in terms of the homotopy groups of a spectrum.
Our work offers a solution to this question.
Furthermore, using this description, we resolve the case $n \equiv 1 \pmod 4$, which was left open in \cite{BCT24}.

\section*{Acknowledgements}
The author thanks Takuya Sakasai for his support and helpful comments. This research was supported by FMSP, WINGS Program, the University of Tokyo.

\section{Thom spectra and Pontryagin-Thom Construction}
\label{subsec:pt_construction}

In this section, we review the Pontryagin-Thom construction, which provides a fundamental link between geometry and homotopy theory. 

Begin by recalling, we recall the classical Pontryagin-Thom construction \cite{Thom54}.
Let $M^k$ be a closed oriented $k$-dimensional smooth manifold embedded in $\mathbb{R}^{k+n}$ (or $S^{k+n}$).
Let $\nu(M)$ be the normal bundle of this embedding.
The orientation is naturally induced on this bundle.
The Thom space $M^{\nu(M)}$ of the bundle $\nu(M)$ is defined as the quotient space $D(\nu) / S(\nu)$, where $D(\nu)$ and $S(\nu)$ are the disk bundle and sphere bundle of $\nu$, respectively.

The Pontryagin-Thom construction gives a ``collapse map''
$$
f \colon S^{k+n} \to M^{\nu(M)}.
$$
This map is defined by collapsing the complement of a tubular neighborhood of $M$ in $S^{k+n}$ to the base point of the Thom space.

The construction leads to the main theorem, which relates oriented cobordisms to the stable homotopy groups of the Thom spectra of canonical oriented vector bundles.

\begin{thm}[Pontryagin-Thom Isomorphism]
Let $\Omega_k^{\mathrm{SO}}$ be the $k$-dimensional oriented cobordism group. There is an isomorphism
$$
\Phi \colon \Omega_k^{\mathrm{SO}} \xrightarrow{\cong} \pi_{k+n}(BSO(n)^{L_n^{\mathrm{SO}}}) \quad (\text{for } n \text{ large enough})
$$
where $\pi_{k+n}(BSO(n)^{L_n^{\mathrm{SO}}})$ is the $(k+n)$-th homotopy group of the Thom space of the canonical rank $n$ oriented bundle $L_n^{\mathrm{SO}}$.
\end{thm}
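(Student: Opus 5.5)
The plan is the classical argument in three parts: construct a map $\Phi$ from bordism classes to homotopy classes, show it is well defined, and build an inverse using transversality.

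\textbf{Construction of $\Phi$.} Given a closed oriented $k$-manifold $M$, choose $n$ large, at least $k+2$. By the Whitney embedding theorem there is an embedding $M\hookrightarrow \R^{k+n}\subset S^{k+n}$, and any two such embeddings are isotopic. The normal bundle $\nu(M)$ has rank $n$ and carries the orientation induced from $TM\oplus\nu(M)\cong\R^{k+n}$. It is therefore classified by a bundle map $\nu(M)\to L_n^{\mathrm{SO}}$ covering a map $M\to BSO(n)$, and this map is unique up to homotopy. Composing the collapse map $f\colon S^{k+n}\to M^{\nu(M)}$ with the induced map of Thom spaces gives an element of $\pi_{k+n}(BSO(n)^{L_n^{\mathrm{SO}}})$.

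\textbf{Well-definedness.} This involves three checks.
\begin{itemize}
\item \emph{Independence of choices.} Different tubular neighbourhoods, embeddings and classifying maps are related by isotopies and homotopies, so the resulting classes agree.
\item \emph{Bordism invariance.} An oriented bordism $W$ between $M_0$ and $M_1$ embeds in $\R^{k+n}\times[0,1]$ extending the given embeddings at the ends. The collapse map of $W$, composed with its classifying map, is then a homotopy between the two composites.
\item \emph{Additivity.} Disjoint union corresponds to the sum in $\pi_{k+n}$, by placing the two manifolds in separate half-spaces.
\end{itemize}

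\textbf{Inverse via transversality.} Let $\alpha\colon S^{k+n}\to BSO(n)^{L_n^{\mathrm{SO}}}$ be any map. Its image is compact, so it lies in the Thom space of the restriction of $L_n^{\mathrm{SO}}$ to a finite Grassmannian $\widetilde{Gr}_n(\R^{n+N})$, which is a smooth manifold. Homotope $\alpha$ so that it is smooth near the preimage of the zero section and transverse to that section; this uses Thom's transversality theorem. Then $M=\alpha^{-1}(0)$ is a closed $k$-manifold. Its normal bundle is the pullback of $L_n^{\mathrm{SO}}$, which orients $\nu(M)$ and hence orients $M$. The collapse map of $M$, composed with its classifying map, is homotopic to $\alpha$ by the linear homotopy on the tubular neighbourhood, so $\Phi$ is surjective.

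For injectivity, suppose $\Phi[M]=0$. Take a null-homotopy $S^{k+n}\times[0,1]\to BSO(n)^{L_n^{\mathrm{SO}}}$, make it transverse to the zero section relative to the ends, and take the preimage of the zero section. This preimage is an oriented bordism from $M$ to the empty manifold.

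\textbf{Main obstacle.} The delicate step is transversality. The Thom space is not a manifold at the basepoint, so transversality must be achieved only on the open complement of the basepoint, which is the total space of a vector bundle over a finite-dimensional manifold. This must be done by a homotopy that does not move the map near the basepoint, and relative to the boundary in the injectivity step. Once that is arranged, the remaining steps are standard checks of the kind above. Finally, the largeness of $n$ ensures the answer is independent of $n$ and of the embedding. Concretely, $n\ge k+2$ makes any two embeddings isotopic, and the stabilisation maps become isomorphisms in this range by the Freudenthal suspension theorem.
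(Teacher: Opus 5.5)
Your proposal is correct and is the classical argument the paper relies on. The paper gives no proof of its own: it only recalls the collapse map $S^{k+n}\to M^{\nu(M)}$ composed with the classifying map of the oriented normal bundle, cites Thom, and notes that the inverse comes from transversality. Your write-up of well-definedness, surjectivity and injectivity fills in exactly the standard steps behind that citation.
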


These framework can be extended to the setting of Thom spectra, which generalize Thom spaces. For definitions of the Thom spectrum associated with a virtual vector bundle and the generalized Pontryagin-Thom construction for Thom spectra, we refer the reader to \cite[Section2]{Detal26} and \cite{Rudyak98}.

\section{The proof of theorem \ref{thm:A}}

In this section, we prove Theorem A. The proof relies on obstruction theory, as also mentioned in \cite[Remark A.3]{Ebert13}.

We first prove the following two lemmas, which are purely regarding vector bundles.

\begin{lemma}\label{lem:bundle iso1}
Let $M$ be an $n$-dimensional finite CW complex, and let $\xi_1$ and $\xi_2$ be $m$-dimensional vector bundles over $M$. If $m \geq n+1$ and $\xi_1 \oplus \R$ and $\xi_2 \oplus \R$ are isomorphic as vector bundles over $M$, then $\xi_1$ and $\xi_2$ are also isomorphic.
\end{lemma}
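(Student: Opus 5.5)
The plan is to use the stabilization map on classifying spaces, $BO(m) \to BO(m+1)$, together with its connectivity. Isomorphism classes of rank $m$ bundles over $M$ are in bijection with homotopy classes $[M, BO(m)]$, and the operation $\xi \mapsto \xi \oplus \R$ corresponds to postcomposition with the map $j\colon BO(m) \to BO(m+1)$. So we need to show that $j_*\colon [M, BO(m)] \to [M, BO(m+1)]$ is injective whenever $\dim M = n \le m-1$.

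First I determine the connectivity of $j$. It is (up to homotopy) the sphere bundle $S^m \to BO(m) \to BO(m+1)$, coming from $O(m+1)/O(m) = S^m$. Since $S^m$ is $(m-1)$-connected, the long exact sequence of homotopy groups shows that $j$ is $m$-connected. That is, $\pi_i(j)$ is an isomorphism for $i < m$ and a surjection for $i = m$. Equivalently, the pair $(\mathrm{Mapping\ cylinder}(j), BO(m))$ is $m$-connected.

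Second, I apply the standard Whitehead-type lemma: if $f\colon Y \to Z$ is $k$-connected and $K$ is a CW complex of dimension $d$, then $f_*\colon [K,Y] \to [K,Z]$ is bijective for $d < k$ and surjective for $d = k$. For injectivity, take $a, b\colon M \to BO(m)$ with $j\circ a \simeq j \circ b$. Choose a homotopy $H\colon M\times I \to BO(m+1)$ between them. This gives a relative lifting problem on the CW pair $(M\times I, M\times\partial I)$, which has relative cells of dimension at most $n+1$. Replacing $j$ by a fibration, the obstructions to compressing $H$ into $BO(m)$ relative to $M\times \partial I$ lie in
\[
H^{k}\bigl(M\times I, M\times \partial I; \pi_{k-1}(S^m)\bigr),
\]
and these groups vanish for $k \le n+1 \le m$, since $\pi_{k-1}(S^m) = 0$ for $k-1 \le m-1$. (If $BO(m)$ is not simply connected, local coefficients appear, but they still vanish.) This produces a homotopy $a \simeq b$ in $BO(m)$, hence $\xi_1 \cong \xi_2$.

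The main point requiring care is the dimension count. The homotopy lives on $M\times I$, which has dimension $n+1$, so we need the fiber to be $n$-connected. The fiber $S^m$ is $(m-1)$-connected, and $m-1 \ge n$ is exactly the hypothesis $m \ge n+1$. Everything else is the standard obstruction-theoretic argument, which is consistent with the paper's remark that the proof relies on obstruction theory.
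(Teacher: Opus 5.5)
Your proof is correct. It uses the same obstruction theory as the paper, but in adjoint form.

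\textbf{What differs.} The paper fixes the isomorphism $\Phi\colon\xi_1\oplus\R\to\xi_2\oplus\R$ and deforms it cell by cell over $M$ into the form $\phi\oplus 1$. That is, it compresses a section of the $GL(m+1)$-bundle of isomorphisms into the image of the $GL(m)$-bundle $\mathrm{Isom}(\xi_1,\xi_2)$. The obstructions lie in $\pi_r(GL(m+1),GL(m))\cong\pi_r(S^m)$ for cells of dimension $r\le n$. You instead compress a homotopy $M\times I\to BO(m+1)$ into $BO(m)$ relative to $M\times\partial I$. Your obstructions lie in $\pi_{k-1}(S^m)$ for relative cells of dimension $k\le n+1$. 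The extra dimension coming from $I$ is exactly compensated by $\pi_k(BO(m+1),BO(m))\cong\pi_{k-1}(O(m+1),O(m))$, so both arguments need precisely $n\le m-1$. You also need not replace $j$ by a fibration: $BO(m)=EO(m+1)/O(m)\to BO(m+1)$ is already an $S^m$-bundle, as the paper itself uses in the proof of Theorem~B.

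\textbf{What each buys.} Your formulation reduces everything to a standard citable fact about $m$-connected maps. It also avoids any vagueness about what ``deforming $\Phi$ over a cell'' means. The paper's formulation, however, proves more than the lemma states: it produces $\phi$ with $\phi\oplus 1$ homotopic to the given $\Phi$. It is this refined form that the paper reuses for the surjectivity in Lemma~\ref{lem:bundle iso2} and for the surjectivity of $\lambda$ in Theorem~\ref{thm:A}. Your argument only gives the existence of some isomorphism $\xi_1\cong\xi_2$. To recover the refined version from the classifying-space picture, you would additionally have to relate a given isomorphism of the stabilized bundles to a homotopy of classifying maps together with a covering bundle map. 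That is standard but not automatic.
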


\begin{proof}
We begin by fixing an isomorphism $\Phi: \xi_1 \oplus \R \to \xi_2 \oplus \R$.
Our goal is to deform $\Phi$ over each cell of $M$ until it takes the form of an isomorphism between $\xi_1$ and $\xi_2$, followed by the identity $1 \in GL(1)$ on the trivial line factor, i.e., $\phi \oplus 1 \in GL(m+1)$.

This deformation process corresponds to extending a lift of the natural map $GL(m) \to GL(m+1)$ defined on the boundary $\partial D^r$ to the entire cell $D^r$.
The obstruction to lifting lies in the relative homotopy group $\pi_r(GL(m+1), GL(m))\cong \pi_r(S^m)$. 
Since $\pi_r(S^m) = 0$ for $r \leq m-1$, and our assumption $m \geq n+1$ implies $n \leq m-1$ when considering the $n$-dimensional skeleton of $M$, the obstruction vanishes for all cells of dimension $r \leq n$.

Therefore, the deformation can be performed over the entire space $M$, resulting in an isomorphism of the form $\phi \oplus 1$. This implies that $\xi_1$ and $\xi_2$ are isomorphic as vector bundles over $M$.
\end{proof}

\begin{lemma}\label{lem:bundle iso2}
Let $M$ be an $n$-dimensional finite CW complex, and let $\xi$ be an $m$-dimensional vector bundle over $M$.
There is a natural bundle map from the $GL(m)$-bundle $\mathrm{Isom}(\xi)$ over $M$ to the $GL(m+1)$-bundle $\mathrm{Isom}(\xi\oplus \R)$ over $M$, which is obtained by taking the direct sum with $1 \in GL(1)$.
Let $\Gamma(\mathrm{Isom}(\xi))$ denote the set of homotopy classes of sections of $\mathrm{Isom}(\xi)$.

Then, the map $\Gamma(\mathrm{Isom}(\xi))\to \Gamma(\mathrm{Isom}(\xi\oplus \R))$ induced by the bundle map is surjective when $m=n+1$, and is bijective when $m\geq n+2$.
\end{lemma}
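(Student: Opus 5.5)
We argue exactly as in the proof of Lemma~\ref{lem:bundle iso1}, but now keep track of homotopies as well as of existence. Write $\mathrm{Isom}(\xi)$ for the bundle whose fibre over $x$ is $GL(\xi_x)$; it is a bundle of groups, with fibre $GL(m)$. The stabilization map $\iota:\mathrm{Isom}(\xi)\to\mathrm{Isom}(\xi\oplus\R)$, $\phi\mapsto\phi\oplus 1$, is fibrewise the inclusion $GL(m)\to GL(m+1)$. This inclusion is, up to homotopy, the fibre inclusion of the fibration $GL(m)\to GL(m+1)\to GL(m+1)/GL(m)\simeq O(m+1)/O(m)=S^m$. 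Hence the pair $(GL(m+1),GL(m))$ is $(m-1)$-connected, i.e.\ $\pi_r(GL(m+1),GL(m))\cong\pi_r(S^m)=0$ for $r\le m-1$.

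\emph{Surjectivity} (valid when $m\ge n+1$). Let $\Phi$ be a section of $\mathrm{Isom}(\xi\oplus\R)$. We deform $\Phi$ cell by cell over the skeleta of $M$ into the image of $\iota$. Over an $r$-cell, trivialize the bundle. The problem then becomes: given a map $(D^r,\partial D^r)\to(GL(m+1),GL(m))$, homotope it rel $\partial D^r$ into $GL(m)$. The obstruction lives in $\pi_r(GL(m+1),GL(m))=\pi_r(S^m)$, which vanishes because $r\le n\le m-1$. Homotopy extension then propagates the deformation to the higher skeleta. The result is a section homotopic to $\Phi$ that lies in the image of $\iota$, so the induced map on $\Gamma$ is surjective, in particular for $m=n+1$.

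\emph{Injectivity} (valid when $m\ge n+2$). Suppose $\phi_0,\phi_1$ are sections of $\mathrm{Isom}(\xi)$ and $H$ is a homotopy of sections of $\mathrm{Isom}(\xi\oplus\R)$ from $\iota\phi_0$ to $\iota\phi_1$. View $H$ as a section over $M\times[0,1]$, a CW complex of dimension $n+1$, which already lies in the image of $\iota$ over the subcomplex $M\times\{0,1\}$. Apply the relative version of the same cell-by-cell deformation, rel $M\times\{0,1\}$. The cells involved have dimension at most $n+1\le m-1$, so all obstructions vanish. This deforms $H$ into a homotopy through sections of $\mathrm{Isom}(\xi)$ from $\phi_0$ to $\phi_1$, which gives injectivity.

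The only point needing care is the identification of the local obstruction with the relative homotopy group in the twisted, bundle setting. This is handled by the standard observation that deforming a section into a subbundle of a fibre bundle pair with fibre pair $(F,A)$ is governed, cell by cell, by $\pi_r(F,A)$. When that group vanishes in all relevant degrees, no twisted coefficients are needed, so the local triviality of the bundle over each cell suffices.
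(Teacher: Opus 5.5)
Your proposal is correct and takes essentially the same approach as the paper. Surjectivity comes from the cell-by-cell compression argument of Lemma~\ref{lem:bundle iso1}, with obstructions in $\pi_r(GL(m+1),GL(m))\cong\pi_r(S^m)$; injectivity comes from rerunning that argument on the $(n+1)$-dimensional complex $M\times[0,1]$ rel $M\times\{0,1\}$, where $n+1\le m-1$ kills every obstruction, and you make the compression criterion and homotopy extension step more explicit than the paper does.
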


\begin{proof}
Surjectivity follows from the same argument as in the proof of Lemma~\ref{lem:bundle iso1}.
We now consider the case where $m\geq n+2$ and prove injectivity.

Let $\phi_0$ and $\phi_1$ be automorphisms of $\xi$. Assume that $\phi_0\oplus 1$ and $\phi_1\oplus 1$ are homotopic as automorphisms of $\xi\oplus \R$.
This assumption means there exists an automorphism $\Phi$ of the pullback bundle $\mathrm{pr}_1^{\ast} \xi\oplus \R$ over $M\times [0, 1]$ such that the restriction of $\Phi$ to $M\times \{i\}$ is $\phi_i \oplus 1$ for $i=0, 1$.

We can now deform this automorphism $\Phi$ on the interior $M\times (0, 1)$ following the strategy used in the proof of Lemma~\ref{lem:bundle iso1}.
Since $M\times [0, 1]$ is an $(n+1)$-dimensional CW complex, and the dimension of the bundle is $m$, the condition for the vanishing of the obstruction to the deformation is that the dimension of the base space is less than or equal to $m-1$.
Since $m \geq n+2$, we have $m-1 \geq n+1$.
Thus, the obstruction to performing the deformation over the $(n+1)$-dimensional space $M\times [0, 1]$ vanishes.

Therefore, $\Phi$ can be deformed so that its restriction to $M \times \{t\}$ is of the form $\phi_t \oplus 1$ for all $t \in [0, 1]$, where $\phi_t$ is an automorphism of $\xi$. This shows that $\phi_0$ and $\phi_1$ are homotopic as automorphisms of $\xi$.
This establishes injectivity.
\end{proof}

\begin{proof}[Proof of Theorem \ref{thm:A}]
We begin by proving that $\lambda$ is an isomorphism.
Surjectivity of $\lambda$ follows from Lemma~\ref{lem:bundle iso2}.
More precisely, let $\Phi$ be a stable isomorphism between $T M \oplus \R^m$ and $g^{*} \xi$, which means $\Phi$ is an isomorphism between $T M \oplus \R^{m+N}$ and $g^{*} \xi \oplus \R^N$ for some sufficiently large integer $N \ge 0$.
Since the rank of $T M \oplus \R^m$ is $n+m\geq n+1$, and by the stability hypothesis of Lemma~\ref{lem:bundle iso2}, there exists an isomorphism $\phi$ between $T M \oplus \R^m$ and $g^{*} \xi$ such that $\phi \oplus 1_{\R^N}$ is homotopic to $\Phi$.
If we take this $\phi$ as the bundle isomorphism, we have $\lambda([M, g, \phi]) = [M, g, \Phi]$. This proves surjectivity.

Since $\lambda$ is a surjective homomorphism from a commutative monoid to an abelian group, to prove injectivity, it suffices to show that if $[M, g, \phi]=0 \in \Omega(X; \xi-\R^m)$, then $[M, g, \phi]=0 \in \Omega'(X; \xi, m)$.
This follows from the logic that if $\lambda(a)=\lambda(b)$, we can choose an element $c$ such that $\lambda(c)=-\lambda(a)$. Then, $\lambda(a+c)=\lambda(b+c)=0$. If the implication holds (i.e., mapping to zero implies being zero), we have $a+c=b+c=0$, which implies $a = a + (b+c) = (a+c) + b = 0 + b = b$.

Since $[M, g, \phi]=0 \in \Omega(X; \xi-\R^m)$, there exist a sufficiently large positive integer $N$, a sequence of $n$-dimensional closed manifolds $M=M_0, M_1, \ldots, M_l$ with $M_{l+1}=\emptyset$, continuous maps $g_i:M_i\to X$, and bundle isomorphisms $\Phi_i$ between $TM_i\oplus \R^{m+N}$ and $g_i^{*}\xi\oplus \R^N$. Note that we set $g_0=g$ and $\Phi_0=\phi\oplus \R^N$.

Furthermore, there exist a sequence of $(n+1)$-dimensional compact manifolds $N_0, N_1, \ldots, N_l$, maps $h_i:N_i\to X$, and isomorphisms $\Psi_i$ between $TN_i\oplus \R^{m+N-1}$ and $h_i^{*}\xi\oplus \R^N$, satisfying the following conditions:

\begin{itemize}
    \item $N_i$ is a cobordism between $M_i$ and $M_{i+1}$.
    \item The restrictions of $h_i$ to $M_i$ and $M_{i+1}$ coincide with $g_i$ and $g_{i+1}$, respectively.
    \item The restrictions of $\Psi_i$ to $M_i$ and $M_{i+1}$ coincide with $\Phi_i$ and $\Phi_{i+1}$, respectively.
\end{itemize}

Here, regarding the identifications $TM_i \oplus \R\cong TN_i|_{M_i}$ and $TM_{i+1} \oplus \R\cong TN_i|_{M_{i+1}}$, the factor $\R$ corresponds to the inward-pointing normal vector on one boundary and the outward-pointing normal vector on the other.

We may assume that every connected component of $N_i$ intersects at least one of $M_i$ or $M_{i+1}$. This is because if a connected component of $N_i$ does not intersect either $M_i$ or $M_{i+1}$, that component can simply be removed from $N_i$.

First, we consider the case where $m \geq 2$.
Since $N_i$ has a non-empty boundary by assumption, it is known that $N_i$ is homotopy equivalent to a CW complex of dimension $n$ or less by using a Morse function.
Therefore, given the isomorphism $T N_i \oplus \R^{m+N-1} \stackrel{\Psi_i}{\cong} h_i^{*}\xi\oplus \R^N$, since $n+m\geq n+2$, Lemma~\ref{lem:bundle iso2} implies that there exists an isomorphism $\psi_i$ between $T N_i$ and $h_i^{*}\xi$, unique up to homotopy, such that $\Psi_i$ is homotopic to $\psi_i\oplus \mathrm{id}_{\mathbb{R}^N}$.
Moreover, the isomorphisms between $TM_i\oplus \mathbb{R}^m$ and $g_i^{*} \xi$ induced by $\psi_{i-1}$ and $\psi_i$ on the boundary $M_i$ become homotopic to $\Phi_i$ after taking the direct sum with $\mathbb{R}^N$.
Thus, the injectivity part of Lemma~\ref{lem:bundle iso2} implies that they are homotopic to each other.
Consequently, these isomorphisms allow us to glue the cobordisms along the boundaries, so that the sequence $N_0, N_1, \ldots, N_l$ defines a zigzag in $\Omega'(X; \xi, m)$, which proves that $[M, g, \phi]=0 \in \Omega'(X; \xi, m)$.

Next, we show the case where $m=1$.
The main difficulty in this case is determining whether the isomorphisms between $T N_i \oplus \R^{m-1}$ (which is $T N_i$) and $h_i^{*}\xi$ glue together along the boundaries.
To resolve this issue, we proceed by considering the sequence starting from $N_0$. We first note that, as demonstrated in the $m \geq 2$ discussion, the isomorphism between $T N_i \oplus \R$ and $h_i^{*}\xi \oplus \R$ is uniquely determined up to homotopy.

Since the isomorphism between $T M_0 \oplus \R$ and $g_0^{*}\xi$ is already fixed as $\phi$ over $M=M_0$, we need to construct an isomorphism between $T N_0$ and $h_0^{*}\xi$ over $N_0$ whose restriction to $M_0$ is compatible with $\phi$ via the isomorphism $T N_0 \oplus \R \cong h_0^{*}\xi \oplus \R$.

This can be achieved by the following procedure:
\begin{itemize}
    \item First, we take a CW decomposition of $N_0$ as an $(n+1)$-dimensional CW complex.
    \item Next, we construct an isomorphism between $T N_0$ and $h_0^{*}\xi$ over $N_0 \setminus M_0$, proceeding sequentially from lower-dimensional cells. This construction involves extending a lift of the map $GL(n+1) \to GL(n+2)$ given on the boundary $\partial D^r$ to the entire cell $D^r$.
    As mentioned above, the obstruction to this lifting vanishes for dimensions $r \leq n$. Therefore, we can complete this construction up to the $n$-cells.
    \item This leaves the $(n+1)$-cells. Using the isomorphism defined on $\partial D^{n+1}$, we can extend the isomorphism between $T N_0$ and $h_0^{*}\xi$ to $D^{n+1}$ minus a single interior point. This allows us to construct the desired isomorphism over $N_0$ excluding a finite number of points.
    \item Finally, we excise the open neighborhoods of these finite points. This procedure yields a cobordism in $\Omega'(X; \xi, 1)$ between $M_0$ and $M_1 \cup \left(\bigcup_{j=1}^{k_1} S^n\right)$.
\end{itemize}

By iteratively repeating this process, we find that $M$ and a finite union of $S^n$'s determine the same element in $\Omega'(X; \xi, 1)$. The remaining task is to determine whether these $S^n$ vanish in $\Omega'(X; \xi, 1)$.

This part constitutes the most technical aspect of the proof.
The operation performed above implies that the remaining $S^n$ elements we must consider are of the form $[S^n, c:S^n\to X, \phi] \in \Omega'(X; \xi, 1)$, where:
\begin{itemize}
    \item The map $c: S^n \to X$ is a constant map.
    \item When $\phi$ is viewed as a map $\phi: S^n \to GL(n+1)$ via the canonical isomorphism $T S^n \oplus \R \cong \R^{n+1}$, the class $[\phi]$ maps to zero under the homomorphism $[S^n, GL(n+1)] \to [S^n, GL(n+2)]$.
\end{itemize}
We now show that these elements vanish in $\Omega'(X; \xi, m)$ by explicitly constructing a cobordism.

First, note that $GL(n+1)$ has two path-connected components.
Therefore, elements of $[S^n, GL(n+1)]$ can be classified into two types depending on whether their image lies in $GL^{+}(n+1)$ or $GL^{-}(n+1)$.
Let $c: S^n \to GL^{-}(n+1)$ be the constant map defined by $v \mapsto \text{``reflection across the hyperplane normal to } (1, \ldots, 0, 0)\text{''}$.
Also, let $r: S^n \to GL^{-}(n+1)$ be the continuous map defined by $v \mapsto \text{``reflection across the hyperplane normal to } v\text{''}$.
For continuous maps $f, g: S^n \to GL(n+1)$, we denote by $f \cdot g$ the map obtained by multiplying the images of $f$ and $g$ at each point of $S^n$ using the group structure of $GL(n+1)$.
By replacing $\phi$ with $\phi \cdot c$ as necessary, we denote by $\phi'$ the map whose codomain is adjusted to lie within $GL^{+}(n+1)$.

Next, consider the kernel of the map $\pi_n(GL^{+}(n+1)) \to \pi_n(GL^{+}(n+2))$.
Considering the long exact sequence of homotopy groups $\pi_{n+1}(S^{n+1}) \cong \Z \to \pi_n(GL^{+}(n+1)) \to \pi_n(GL^{+}(n+2))$, this kernel is a cyclic group generated by the clutching function $r \cdot c: S^n \to GL^{+}(n+1)$ associated with the trivializations of $TS^{n+1}$ on the northern and southern hemispheres, respectively.

Since $GL^{+}(n+1)$ is a path-connected H-space, all actions of the fundamental group on the higher homotopy groups are trivial. Thus, $[S^n, GL^{+}(n+1)]$ and $\pi_n(GL^{+}(n+1))$ can be naturally identified.
Since $\phi$ maps to $0$ under $[S^n, GL(n+1)] \to [S^n, GL(n+2)]$, it follows that $\phi'$ also maps to $0$.
Therefore, we can choose an integer $a$ such that $[\phi'] = a[r \cdot c] \in \pi_n(GL^{+}(n+1))$.

If $a=0$, this means that $\phi$ is null-homotopic, so $\phi$ can be directly extended to $D^{n+1}$.
Thus, this case is null-cobordant.

Next, consider the case where $a$ is positive.
In this case, we first decompose $S^n$ into $a$ copies of $S^n$ (more precisely, we construct a cobordism between $S^n$ and a disjoint union of $a$ copies of $S^n$) such that on each copy, the isomorphism $\phi$ is homotopic to either $r$ or $r \cdot c$.
Here, geometrically considering the operation of multiplying the isomorphism $\phi$ by $r$ from the left, this corresponds to swapping the outward and inward directions of the corresponding $\R$ factor in the identification $TS^n \oplus \R \cong TD^{n+1}|_{S^n}$.
Therefore, by swapping the outward/inward direction of $\oplus \R$, both $r$ and $r \cdot c$ are replaced by constant maps, which allows us to extend this $S^n$ to $D^{n+1}$.
Thus, this case is also null-cobordant.

Here, when $n$ is even, the image of the homomorphism $\pi_{n+1}(GL(n+2)) \to \pi_{n+1}(S^{n+1}) \cong \Z$ induced by the continuous map $GL(n+2) \to S^{n+1}$ (taking the $(n+2)$-th column) contains $\chi(S^{n+2})=2$ \cite{Steenrod51}.
Consequently, $2[r \cdot c] = 0 \in \pi_n(GL^{+}(n+1))$ holds.
Therefore, $a$ can be taken as either $0$ or $1$, which completes the proof for this case.

The remaining case is when $n$ is odd and $a$ is negative.
First, similar to the positive case, we can reduce this to the case $a=-1$.
We begin by proving the following lemma.
\begin{lemma}
For a continuous map $f: S^n \to GL(n+1)$, let $\chi_f$ denote the degree of the map $S^n \to S^n$ obtained by composing $f$ with the natural map $GL(n+1) \to S^n$.
If $n$ is odd, then
\[ \chi_{r \cdot f} = 2 - \chi_f \]
holds. In particular, $\chi_r = \chi_{r \cdot c} = 2$.
\end{lemma}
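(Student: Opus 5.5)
The plan is to reduce everything to a degree computation for maps $S^n \to S^n$, using the bidegree of one universal map $S^n\times S^n\to S^n$. Let $e\in\R^{n+1}$ be the fixed vector whose image defines $GL(n+1)\to S^n$, $A\mapsto Ae/|Ae|$. For $f\colon S^n\to GL(n+1)$ put $g_f(v)=f(v)e/|f(v)e|$, so that $\chi_f=\deg g_f$. Since $r(v)$ is the reflection $w\mapsto w-2\langle w,v\rangle v$, it preserves norms, and so
\[ g_{r\cdot f}(v)=r(v)\bigl(g_f(v)\bigr)=\mu\bigl(v,g_f(v)\bigr),\qquad \mu(v,w)=w-2\langle w,v\rangle v . \]
So $g_{r\cdot f}$ is the composite of $(\mathrm{id},g_f)\colon S^n\to S^n\times S^n$ with $\mu\colon S^n\times S^n\to S^n$.

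The first step is the standard bidegree formula. If $\mu$ has bidegree $(a,b)$, meaning $\mu|_{S^n\times\{w\}}$ has degree $a$ and $\mu|_{\{v\}\times S^n}$ has degree $b$, then $\deg\bigl(\mu\circ(\mathrm{id},g)\bigr)=a+b\deg g$. This follows because $(\mathrm{id},g)_*[S^n]=[S^n\times pt]+\deg g\,[pt\times S^n]$ in $H_n(S^n\times S^n)\cong\Z^2$.

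The second step computes $b$ and $a$ for $n$ odd; this is where the parity of $n$ enters.
\begin{itemize}
\item For fixed $v$, the map $w\mapsto\mu(v,w)$ is a hyperplane reflection, so $b=-1$.
\item For fixed $w$, it is convenient to write $\mu(v,w)=-\sigma(v)$ with $\sigma(v)=2\langle w,v\rangle v-w$. Here $\sigma$ doubles the angle from $w$ along great circles through $w$.
\item The map $\sigma$ satisfies $\sigma(-v)=\sigma(v)$, so it factors through $\R P^n$. A generic point $y$ has exactly two preimages $\pm v_0$, where $v_0$ lies on the great circle through $w$ and $y$ at half the angle.
\item The local degrees at $\pm v_0$ differ by the degree of the antipodal map, $(-1)^{n+1}$. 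Hence $\deg\sigma=1+(-1)^{n+1}=2$ for $n$ odd.
\item Composing with the antipodal map multiplies the degree by $(-1)^{n+1}=1$, so $a=2$.
\end{itemize}
Consequently $\chi_{r\cdot f}=2-\chi_f$.

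For the particular cases, $r=r\cdot I$ and $r\cdot c$ with $I$ and $c$ constant maps. Their $g$ are constant, so $\chi_I=\chi_c=0$, and the formula gives $\chi_r=\chi_{r\cdot c}=2$.

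The main obstacle is pinning down the sign conventions in computing $a$. I would first check the local orientation behaviour of $\sigma$ near $v_0$ directly in coordinates, with $w$ the north pole and $v$ at polar angle $\theta$ mapping to polar angle $2\theta$. This confirms that the two preimages contribute with the same sign exactly when $n$ is odd, which is the claimed value $a=2$.
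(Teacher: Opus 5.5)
Your proof is correct, but it takes a different route from the paper.

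\textbf{The paper's route.} The paper glues a trivialization of $TS^{n+1}$ built from $f$ on the northern hemisphere to one built from $r\cdot f$ on the southern hemisphere; this is where the definition of $r$ as a clutching-type map enters. It then takes the last frame vector to get a vector field on $S^{n+1}$ minus the poles, whose indices at the poles are $\chi_f$ and $\chi_{r\cdot f}$. Poincar\'e--Hopf with $\chi(S^{n+1})=2$ finishes the argument.

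\textbf{Your route.} You write $g_{r\cdot f}=\mu\circ(\mathrm{id},g_f)$ with $\mu(v,w)=r(v)w$ and show that $\mu$ has bidegree $(2,-1)$. The sign check you flag does come out right. In geodesic polar coordinates about $w$ one has $\sigma(\theta,u)=(2\theta,u)$ for $\theta<\pi/2$, so the local degree at $v_0$ is $+1$.

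\textbf{Comparison.} Your argument is elementary and makes every orientation explicit. The paper, by contrast, leaves implicit why the pole indices are exactly $\chi_f$ and $\chi_{r\cdot f}$ with the correct signs. Your computation also gives, at no extra cost, $a=(-1)^{n+1}\bigl(1+(-1)^{n+1}\bigr)=\chi(S^{n+1})$ for every $n\ge 1$. Hence $\chi_f+\chi_{r\cdot f}=\chi(S^{n+1})$ in all dimensions, and the even case gives $\chi_{r\cdot f}=-\chi_f$. The paper's argument, on the other hand, explains conceptually why the constant is an Euler characteristic. It also fits directly into the clutching picture of $TS^{n+1}$ used in the surrounding proof of Theorem~A, where multiplication by $r$ is read as swapping the inward and outward normal in $TS^n\oplus\R$.
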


\begin{proof}
We construct a trivialization of $TS^{n+1}$ on the northern hemisphere of $S^{n+1}$ (excluding the north pole) from $f$, and a trivialization on the southern hemisphere (excluding the south pole) from $r \cdot f$.
By the definition of $r$, these trivializations glue along the equator, providing a trivialization of $TS^{n+1}$ on $S^{n+1}$ excluding the north and south poles.

By extracting the $(n+1)$-th tangent vector of this trivialization, we obtain a vector field on $S^{n+1}$ (excluding the poles) that is nowhere zero.
We apply the Poincaré-Hopf theorem to this vector field.

The indices at the north and south poles are $\chi_f$ and $\chi_{r \cdot f}$ by definition. Since the Euler characteristic of $S^{n+1}$ (which is even-dimensional for odd $n$) is 2, we obtain $\chi_f + \chi_{r \cdot f} = 2$.
This proves the lemma.
\end{proof}

Now, since $[\phi'] = -[r \cdot c] \in \pi_n(GL^{+}(n+1))$, we have $\chi_{\phi'} = -\chi_{r \cdot c} = -2$.
This implies $\chi_{r \cdot \phi'} = 2 - (-2) = 4$.

Therefore, by swapping the outward/inward direction of $\oplus \R$, the case $a=-1$ reduces to the case $a=2$ (since the degree corresponds to $2[r \cdot c]$ which has degree 4).
From the above, it is shown that all considered $S^n$ vanish in $\Omega'(X; \xi, 1)$.
This shows that $\lambda$ is an isomorphism.

It remains to show the latter half of Theorem A; that is, the direct cobordism relation in $\Omega'(X; \xi, m)$ is symmetric when $n \neq 1$.
The strategy of this proof is analogous to the first half of the proof of Theorem~4.4 in \cite{BS14}.

Suppose $M_0 \sim M_1$ via a cobordism $N$ between them.
We consider constructing a vector field on $N$ that points outwards along the boundaries $M_0$ and $M_1$.
If such a vector field is nowhere zero, we can compose the trivialization of $TN$ with the reflection defined by this vector field. This allows us to swap the inward and outward directions of the $\oplus \R$ factor at both $M_0$ and $M_1$, thereby establishing symmetry.
If the Euler characteristic of $N$ is non-zero, such a vector field may not exist globally; however, one can always construct such a vector field away from a finite set of points.
By excising open neighborhoods of these points from $N$, we obtain the relation $M_1 \sim M_0 \cup \bigcup_{j=1}^{k} S^n$.

It remains to show that these $S^n$ components are null-cobordant \emph{without switching the inward/outward direction} of $\oplus \R$.

When $n$ is even, as previously stated, $2[r \cdot c] = 0$. Thus, the element $\phi \in [S^n, GL(n+1)]$ corresponding to the isomorphism is either null-homotopic or of the form $r \cdot (\text{null-homotopic element})$.
In the null-homotopic case, the sphere clearly bounds $D^{n+1}$.
In the case of $r \cdot (\text{null-homotopic element})$, we previously noted that the sphere bounds $D^{n+1}$ \emph{if} we switch the boundary direction.
This implies that if we take a parallelizable closed $(n+1)$-dimensional manifold and remove $D^{n+1}$, the remaining manifold bounds the $S^n$ in question.
For example, considering the torus $T^{n+1} = S^1 \times \cdots \times S^1$, we see that $T^{n+1} \setminus \mathrm{int}(D^{n+1})$ serves as a bounding manifold.

Next, consider the case where $n$ is an odd integer with $n \ge 3$.
As discussed before, for the element $\phi$ corresponding to the isomorphism, we may assume $[\phi']$ is one of $0, [r \cdot c]$, or $-[r \cdot c]$ in $\pi_n(GL^{+}(n+1))$.
The case $0$ is bounded by $D^{n+1}$, and the case $[r \cdot c]$ is bounded by $T^{n+1} \setminus \mathrm{int}(D^{n+1})$ as described above.
It remains to find an $(n+1)$-dimensional compact manifold that bounds the case $-[r \cdot c]$.
To state the conclusion, we can use $S^{\frac{n+1}{2}} \times S^{\frac{n+1}{2}} \setminus \mathrm{int}(D^{n+1})$ when $n+1$ is a multiple of 4, and $S^{\frac{n-1}{2}} \times S^{\frac{n+3}{2}} \setminus \mathrm{int}(D^{n+1})$ when $n-1$ is a multiple of 4.

We demonstrate this below.
Let $X^{n+1}$ denote $S^{\frac{n+1}{2}} \times S^{\frac{n+1}{2}}$ if $n+1$ is divisible by 4, and $S^{\frac{n-1}{2}} \times S^{\frac{n+3}{2}}$ if $n-1$ is divisible by 4.
Note that $X^{n+1}$ is a stably trivial, connected, closed $(n+1)$-dimensional manifold with Euler characteristic $4$ (this proof requires $n \neq 1$, as connectedness fails otherwise).
Since a stably trivial closed manifold is almost parallelizable, $TX^{n+1}$ admits a trivialization on $X^n$ minus a single point.

Consider the obstruction to extending this trivialization to the point.
This obstruction lies in $[S^n, GL(n+1)]$.
Since $X^{n+1}$ is stably trivial, this element maps to $0$ under the stabilization map $[S^n, GL(n+1)] \to [S^n, GL(n+2)]$.

Applying the Poincaré-Hopf theorem to $X^{n+1}$, the image of this element under the map $[S^n, GL(n+1)] \to [S^n, S^n]$ coincides with the Euler characteristic of $X^{n+1}$, which is $4$.
By the lemma stated earlier, swapping the outward and inward directions of $\oplus \R$ transforms an element of degree $4$ into an element of degree $2-4 = -2$ in $[S^n, GL(n+1)]$.
Consequently, an $S^n$ carrying an element of degree $-2$ (corresponding to $-[r \cdot c]$ since the degree of $[r \cdot c]$ is $2$) is bounded by $X^{n+1} \setminus \mathrm{int}(D^{n+1})$.

In conclusion, all such $S^n$ are null-cobordant without requiring a change in the boundary normal direction. Thus, $M_1 \sim M_0 \cup \bigcup_{j=1}^{k_1} S^n \sim M_0$, proving the symmetric property.

This completes the proof of Theorem~\ref{thm:A}.

\end{proof}

\begin{remark}
When $n=1$, the symmetric property does not necessarily hold. See Example~4.6 in \cite{BS14}.
\end{remark}

\begin{remark}
We present a simple consequence of this theorem.
By \cite[Theorem~3.0.8]{Gollinger16}, the group $\pi_{1}(MTSO(2))\cong \pi_{-1}(BSO(2)^{-(L_2^{SO}\oplus \R^2)})\cong \Omega'(BSO(2); L^{SO}_2 \oplus \R^2, 1)$ vanishes.
Since the cobordism group $\Omega'(BSO(2); L^{SO}_2 \oplus \R^2, 1)$ is trivial, every element is null-cobordant (i.e., it bounds some manifold).

In particular, given a pair $(M, X)$ consisting of a closed oriented $3$-manifold $M$ and a nowhere-vanishing vector field $X$ on $M$, there exists a triple $(N, Y_1, Y_2)$ consisting of a compact oriented $4$-manifold $N$ and two linearly independent vector fields $Y_1, Y_2$ on $N$ such that:
\begin{itemize}
  \item $\partial N\cong M$;
  \item $Y_1|_M=X$, and $Y_2|_M$ is the outward-pointing normal vector field.
\end{itemize}

This result is stronger than the fact that the $3$-dimensional oriented cobordism group is trivial; it means that we can impose additional structure on the bounding manifolds.
\end{remark}

\section{The proof of Theorem B}

In this section, we provide the proofs of Theorem~\ref{thm:B1}, \ref{thm:B2}, and \ref{thm:B3}.
All proofs rely on the methods detailed in Section~5 of \cite{Detal26}.
The key result we employ is the following theorem, which describes a fibrational relationship between Thom spectra.

\begin{thm}[Theorem~5.1, \cite{Detal26}]
Let $X$ be a topological space, $V$ a virtual vector bundle over $X$, and $W$ a real vector bundle over $X$.
Let $f: S(W) \to X$ be the spherical fibration of $W$.
Then, the following is a fibration of spectra:
$$
S(W)^{f^{\ast}V} \to X^V \to X^{V\oplus W}
$$
\end{thm}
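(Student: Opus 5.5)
The plan is to deduce this cofibre sequence from the cofibre sequence of Thom spaces attached to the sphere bundle, and then pass to spectra. I start with the case where $V$ is an honest vector bundle, then extend to virtual bundles by stability.

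First I recall the geometric input. Let $p\colon D(W)\to X$ be the disk bundle, and write $\pi\colon D(W)\oplus_X \cdot \to X$ for the bundle $p^*V$ over $D(W)$. There is a homeomorphism
\[ X^{V\oplus W}\cong D(W)^{p^*V}/S(W)^{f^*V}, \]
valid because
\[ D(V\oplus W)/S(V\oplus W)\cong (D(V)\times_X D(W))/\partial. \]
Since $p$ is a homotopy equivalence $D(W)\simeq X$ covered by a bundle map, we also have $D(W)^{p^*V}\simeq X^V$. The inclusion of the subspace $S(W)^{f^*V}\subset D(W)^{p^*V}$ is a cofibration, because sphere bundles are neighbourhood deformation retracts. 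It follows that
\[ S(W)^{f^*V}\to X^V\to X^{V\oplus W} \]
is a cofibre sequence of based spaces, with maps given respectively by the zero-section-type inclusion composed with $p$ and by the collapse.

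Next I pass to spectra. For a virtual bundle $V=E-\R^k$, the Thom spectrum is
\[ X^V=\Sigma^{-k}\Sigma^\infty X^{E}. \]
The construction above is compatible with adding trivial summands, since Thom spaces of $E\oplus\R$ are suspensions of those of $E$. Hence applying $\Sigma^{-k}\Sigma^\infty$ to the space-level cofibre sequence for $E$ gives the spectrum-level cofibre sequence for $V$. For a general virtual bundle over a non-compact $X$, I write $X$ as a colimit of finite subcomplexes $X_\alpha$ on which $V$ is of the form $E-\R^k$. The sequences over the $X_\alpha$ are natural in $\alpha$, and homotopy colimits preserve cofibre sequences, so I can pass to the colimit. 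In spectra, cofibre sequences and fibre sequences coincide, which gives the fibration claimed.

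I expect the main obstacle to be the naturality and coherence in this last step: checking that the identification $X^{V\oplus W}\cong D(W)^{p^*V}/S(W)^{f^*V}$ is compatible with the structure maps in the colimit. In practice I would follow the framework cited in \cite[Section 2]{Detal26} and \cite{Rudyak98} and treat this as routine.
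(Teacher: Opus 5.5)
The paper does not prove this statement. It is quoted from \cite[Theorem~5.1]{Detal26} and only applied in the proof of Theorem~B, so there is no internal argument to compare with. Your argument is the standard one and is sound. The identification of $D(W)^{p^*E}/S(W)^{f^*E}$ with $X^{E\oplus W}$ via $D(E)\times_X D(W)\cong D(E\oplus W)$, the equivalence $D(W)^{p^*E}\simeq X^{E}$ through the zero section, and the radial collar of $S(W)$ in $D(W)$ together give the space-level cofibre sequence.

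State the maps precisely, though, rather than as a ``zero-section-type inclusion'' and ``the collapse''. The first map is the Thom map of the bundle map $f^*E\to E$ covering $f$. The second, after precomposing the collapse with the zero section, is the map induced by the inclusion $E\hookrightarrow E\oplus W$. The paper later relies on exactly these maps, for instance when it reads off the horizontal maps via \cite[Proposition~3.17]{Detal26}.

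The step you defer as routine is genuinely needed here. The bundles the paper feeds into the theorem, e.g.\ $-(L^O_n\oplus\R)$ over $BO(n)$, are not globally of the form $E-\R^k$. The inverse total Stiefel--Whitney class of $L^O_n$ is nonzero in every degree, whereas $w_j(E)=0$ for $j>\operatorname{rank}E$. So your colimit argument is actually used, and it needs more than the sentence you give it.

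It goes through if you exhaust $X$ by a \emph{sequence} $X_1\subset X_2\subset\cdots$ of finite subcomplexes (after CW approximation if $X$ is only a space) and choose strictly compatible presentations $E_{i+1}|_{X_i}=E_i\oplus\R^{k_{i+1}-k_i}$. For $BO(n)$ take $X_N=\mathrm{Gr}_n(\R^N)$ with $E_N$ the orthogonal complement of $L^O_n$ in $\R^N$. The space-level homeomorphism is natural for bundle maps and compatible with $X^{E\oplus\R}\cong\Sigma X^{E}$, so you get a strict map of sequential diagrams. Sequential homotopy colimits preserve cofibre sequences, which finishes the argument. With an arbitrary directed system of subcomplexes and independently chosen presentations, the diagram is not strictly functorial; that is exactly the coherence problem you anticipated.

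Alternatively, the parametrized framework of \cite[Section~2]{Detal26} avoids exhaustions entirely. Smash the fibrewise cofibre sequence $S(W_x)_+\to S^0\to S^{W_x}$ over $X$ with the parametrized sphere spectrum of $V$, then push forward to a point.
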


\begin{proof}[Proof of Theorem B]

First, we show that the vertical columns of the $3 \times 3$ diagrams are fibrations of spectra.
This follows from the fact that the projection of the sphere bundle $S(L_{n+1}^{O})$ of the universal rank $(n+1)$ vector bundle $L_{n+1}^{O} \to BO(n+1)$ can be identified with the map $BO(n) \to BO(n+1)$ (via the inclusion $O(n) \hookrightarrow O(n+1)$) due to the identification:
$$
S(L_{n+1}^{O}) = EO(n+1)\times_{O(n+1)} S^n = EO(n+1)\times_{O(n+1)} O(n+1)/O(n) \cong EO(n+1)/O(n).
$$
The same reasoning applies to the spherical bundles corresponding to the maps $B\mathrm{Spin}(n) \to B\mathrm{Spin}(n+1)$, $B\mathrm{Pin}^{-}(n) \to B\mathrm{Pin}^{-}(n+1)$, and $B\mathrm{Spin}^c(n) \to B\mathrm{Spin}^c(n+1)$.
Thus, applying Theorem~5.1 from \cite{Detal26} to these spherical fibrations shows that all vertical columns in the $3 \times 3$ diagrams of Theorem~B are fibrations of spectra.

Next, we explain why the horizontal rows of the $3 \times 3$ diagrams are fibrations of spectra.
First, for the case of $BO(n)$, we consider the spherical bundle of the determinant line bundle $\mathrm{det} L_n^O$. We have the identification:
$$
S(\mathrm{det} L_n^O) = EO(n)\times_{O(n)} \{\pm 1\} = EO(n)\times_{O(n)} O(n)/SO(n) \cong EO(n)/SO(n).
$$
 This spherical bundle is the natural map $BSO(n) \to BO(n)$ induced by the inclusion $SO(n) \hookrightarrow O(n)$.
Thus, applying Theorem~5.1 to $W = \mathrm{det} L_n^O$ shows that the horizontal rows of Theorem~\ref{thm:B1} are fibrations of spectra.

Similarly, since $\mathrm{Pin}^{-}(n) / \mathrm{Spin}(n) \cong \{\pm 1\}$, the same argument shows that the horizontal rows of Theorem~\ref{thm:B2} are also fibrations of spectra.

Finally, the natural inclusion $\mathrm{Spin}(n) \to \mathrm{Spin}^c(n)$ induces a Lie group fibration $\mathrm{Spin}(n) \to \mathrm{Spin}^c(n) \to S^1$.
Since $\mathcal{O}_{n}$ is the pullback of the canonical complex line bundle over $BS^1$ via the map $B\mathrm{Spin}^c(n) \to BS^1$, and we have $\mathrm{Spin}^c(n) / \mathrm{Spin}(n) \cong S^1$, the spherical bundle $S(\mathcal{O}_{n})$ is the map $B\mathrm{Spin}(n) \to B\mathrm{Spin}^c(n)$.
Therefore, the horizontal rows of Theorem~\ref{thm:B3} also give fibrations of spectra.
This completes the proofs of Theorems~\ref{thm:B1}, \ref{thm:B2}, and \ref{thm:B3}.

\end{proof}

\section{The proof of theorem C}

In this section, we provide the proofs of Theorems~\ref{thm:C1}, \ref{thm:C2}, and \ref{thm:C3}.
For the first part of each proof, we utilize Theorem~\ref{thm:A}. For the second part, we employ the long exact sequences of homotopy groups induced by the $3\times 3$ diagrams established in Theorem~B.

\begin{proof}[Proof of Theorem \ref{thm:C1}]
Based on Theorem~\ref{thm:A}, we examine the geometric structure of the cobordism group corresponding to $\Omega'(BO(n); L^{O}_n\oplus \mathrm{det} L^{O}_n, 1) \cong \pi_{-1}(BO(n)^{-(L^{O}_n\oplus \mathrm{det} L^{O}_n)})$.

According to Theorem~\ref{thm:A}, each element of this cobordism group is represented by a triple: an $n$-dimensional closed manifold $M$, a map $g: M \to BO(n)$, and a stable isomorphism $\phi: TM \oplus \R \cong g^{*}(L^{O}_n \oplus \mathrm{det} L^{O}_n)$.
First, observe that the bundle $L^{O}_n \oplus \mathrm{det} L^{O}_n$ carries a natural orientation. Consequently, this induces a natural orientation on $M$ via the isomorphism $\phi$.
Furthermore, via the isomorphism $\phi$, we can uniquely identify a line subbundle $\eta$ of $TM \oplus \R$ that corresponds to the factor $g^{\ast}\mathrm{det} L^{O}_n$.

Conversely, suppose we are given a pair $(M, \eta)$ consisting of a closed oriented $n$-manifold $M$ and a line subbundle $\eta$ of $TM \oplus \R$.
Let $\eta^{\perp}$ denote the orthogonal complement of $\eta$ in $TM \oplus \R$, and let $g: M \to BO(n)$ be a classifying map for $\eta^{\perp}$.
Since the direct sum $\eta^{\perp} \oplus \eta = TM \oplus \R$ is an oriented vector bundle (as $M$ is oriented), the line bundle $\eta$ acts as the determinant bundle of $\eta^{\perp}$. Thus, there exists a natural isomorphism between $\eta$ and $\mathrm{det}(\eta^{\perp})$.
Using this identification, we obtain a natural isomorphism between $TM \oplus \R = \eta^{\perp} \oplus \eta$ and the pullback bundle $g^{*}(L^{O}_n \oplus \mathrm{det} L^{O}_n)$.

Since these correspondences are inverse to each other, the first part of Theorem~C1 is established.

It remains to compute the group $\pi_{-1}(BO(n)^{-(L^{O}_n\oplus \mathrm{det} L^{O}_n)})$. We explain this computation using the diagram provided in Theorem~\ref{thm:B1}.

First, to compute the homotopy groups appearing in the diagram, we utilize the following known lemmas.

\begin{lemma}
Let $V \to X$ be a virtual vector bundle of positive rank. Then $\pi_0(X^V) = 0$.
\end{lemma}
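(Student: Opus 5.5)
The plan is to show that $X^V$ is a connective spectrum which is in fact $0$-connected, i.e.\ $(r-1)$-connected where $r \geq 1$ is the rank of $V$. Then $\pi_0(X^V)=0$ follows immediately. The cleanest route is through the Thom isomorphism in homology combined with the Hurewicz theorem for spectra. Alternatively, one can argue directly with cell structures, and I would choose the cell-structure argument since it avoids orientability issues.

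First I reduce to a finite-dimensional situation. Write $V = E - \R^k$ with $E$ an honest vector bundle of rank $r+k$. We may take $X$ to be a CW complex, replacing it up to weak equivalence if needed. Homotopy groups of spectra commute with filtered colimits, and the Thom spectrum construction commutes with colimits over the skeleta $X^{(j)}$. It therefore suffices to treat the case where $X$ is a finite-dimensional CW complex, or even a finite complex, since every element of $\pi_0$ comes from a compact piece.

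For such $X$, the Thom spectrum is $X^V = \Sigma^{-k}\Sigma^\infty \mathrm{Th}(E)$, where $\mathrm{Th}(E)$ is the Thom space.

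The key step is that $\mathrm{Th}(E)$ is $(r+k-1)$-connected. To see this, use the relative cell structure: over each $j$-cell $D^j$ of $X$, the bundle $E$ is trivial. The pair $(D(E|_{D^j}), S(E|_{D^j}) \cup D(E|_{\partial D^j}))$ is homeomorphic to $(D^j\times D^{r+k}, \partial(D^j\times D^{r+k}))$. Hence $\mathrm{Th}(E)$ admits a CW structure consisting of the basepoint together with one cell of dimension $j+r+k$ for each $j$-cell of $X$.

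All non-basepoint cells therefore have dimension at least $r+k$, so $\mathrm{Th}(E)$ is $(r+k-1)$-connected by cellular approximation. Its suspension spectrum then has $\pi_i = 0$ for $i \leq r+k-1$. Desuspending $k$ times gives $\pi_i(X^V)=0$ for $i\leq r-1$, and in particular $\pi_0(X^V) = 0$ because $r\geq 1$.

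The main obstacle is only bookkeeping: one has to make sure the desuspension conventions for virtual bundles agree with \cite[Section 2]{Detal26}, and that the colimit argument is legitimate for a general (possibly infinite, as in $BO(n)$) base. I would handle the latter by noting that $\pi_0$ of a sequential colimit of spectra is the colimit of the $\pi_0$'s, each of which vanishes by the finite case.

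Geometrically, the same result can be read off Theorem~\ref{thm:Pontryagin-Thom for spectrum} in the form $\pi_0(X^V)\cong\Omega(X;-V)$. Here $-V$ has negative rank, and there are no manifolds of negative dimension, which gives a sanity check for the statement.
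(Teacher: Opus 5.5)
Your argument is correct. It also proves more than the lemma asks, namely that $X^V$ is $(r-1)$-connected. It follows a different route from the paper, whose only justification is that this lemma and its rank-zero companion ``follow immediately from the Adams spectral sequence''.

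The intended argument is presumably this: the mod $p$ Thom isomorphism puts $H^*(X^V;\mathbb{F}_p)$ in degrees $\geq r$, so the $E_2$-term vanishes in stems below $r$. That approach treats both lemmas uniformly; in rank zero one reads off $\Z$ versus $\Z/2\Z$ from $\mathrm{Sq}^1$ on the Thom class, i.e.\ from $w_1(V)$. To be rigorous, however, it needs $X^V$ to be bounded below and of finite type. It also only sees $p$-completed homotopy, so one must still combine all primes with the rational information.

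Your cell decomposition of $\mathrm{Th}(E)$ supplies the bounded-below property directly, needs no finiteness hypothesis on $X$, and is integral from the start. Combined with the Hurewicz theorem and the twisted Thom isomorphism $H_0(X^V;\Z)\cong H_0(X;\Z^{w_1(V)})$, it also yields the rank-zero lemma.

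The one thing to reorder is your first step. A global presentation $V=E-\R^k$ need not exist; for example, $-L^O_n$ over $BO(n)$ with $n\geq 1$ has none, since $\bar w(L^O_n)$ is nonzero in infinitely many degrees. So choose $E$ and $k$ only after passing to a finite-dimensional skeleton. Your bound $r-1$ does not depend on $k$, so the colimit step then works exactly as you describe.
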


\begin{lemma}
Let $X$ be a path-connected space, and let $V \to X$ be a virtual vector bundle of rank $0$. Then
$$
\pi_0(X^V) \cong
\begin{cases}
  \Z, & \text{if } V \text{ is orientable} \\
  \Z/2\Z, & \text{if } V \text{ is non-orientable}
\end{cases}
$$
\end{lemma}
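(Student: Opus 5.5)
Since $X$ is path-connected and $V$ has rank $0$, the connectivity of the Thom spectrum makes $\pi_0(X^V)$ depend only on the orientation behaviour of $V$, that is, on the action of $\pi_1(X)$ on the Thom class. I would compute it through the Hurewicz theorem together with the Thom isomorphism with local coefficients.

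First I would check that $X^V$ is connective, i.e.\ $(-1)$-connected. Writing $V = E - \R^k$ with $E$ an honest rank $k$ bundle, we have $X^V = \Sigma^{-k}\Sigma^\infty \mathrm{Th}(E)$. The Thom space $\mathrm{Th}(E)$ is $(k-1)$-connected, because it has a CW structure with a single $0$-cell (the basepoint) and all other cells in dimension $\geq k$. Hence $X^V$ is $(-1)$-connected. (The same count gives the preceding lemma: in positive rank the spectrum is $0$-connected, so $\pi_0 = 0$.)

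Next, the stable Hurewicz theorem for a $(-1)$-connected spectrum gives $\pi_0(X^V) \cong H_0(X^V;\Z) = \tilde H_k(\mathrm{Th}(E);\Z)$. By the Thom isomorphism with twisted coefficients,
\[ \tilde H_k(\mathrm{Th}(E);\Z) \cong H_0(X;\Z^{w}), \]
where $\Z^{w}$ is the orientation local system of $E$, equivalently of $V$. For path-connected $X$, $H_0(X;\Z^w)$ is the coinvariants of $\Z$ under the action of $\pi_1(X)$. This gives $\Z$ when $w_1(V)=0$, so that the action is trivial, and $\Z/(a - (-a)) = \Z/2\Z$ when some loop reverses orientation.

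The main obstacle is justifying the twisted Thom isomorphism in degree $0$. I would handle it directly: use a cell structure on $X$ with one $0$-cell. The cells of $\mathrm{Th}(E)$ in dimension $k$ and $k+1$ then come from the $0$-cell and the $1$-cells of $X$. Each $1$-cell $e$ contributes the boundary $\pm(1 - \epsilon_e)$ times the bottom cell, where $\epsilon_e = \pm1$ records whether $E$ is orientation-preserving along the loop $e$. The cokernel is therefore $\Z$ or $\Z/2\Z$ according to orientability, which proves the lemma.

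Alternatively, one can argue geometrically via Theorem~\ref{thm:Pontryagin-Thom for spectrum}, since $\pi_0(X^V) \cong \Omega(X;-V)$, which consists of signed points. The bordisms are then $1$-manifolds, and an arc can cancel two like-signed points exactly when there is a loop reversing the orientation of $V$.
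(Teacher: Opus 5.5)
Your proof is correct, and it takes a different route from the paper. The paper only says the lemma ``follows immediately from the Adams spectral sequence.'' That route reads off stem $0$ of the mod $2$ Adams $E_2$-page. Since $H^*(X^V;\mathbb{F}_2)$ is free over $H^*(X;\mathbb{F}_2)$ on the Thom class $U$ with $Sq^1U=w_1(V)U$, stem $0$ has an infinite $h_0$-tower when $w_1(V)=0$ and a single $\mathbb{F}_2$ otherwise. By itself this only identifies the $2$-completion ($\Z_2^\wedge$ versus $\Z/2\Z$), and it needs finite-type hypotheses for convergence. Concluding $\Z$ integrally requires additional odd-primary and rational input.

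Your argument avoids all of this. You show $X^V$ is $(-1)$-connected and use Hurewicz to get $\pi_0(X^V)\cong H_0(X^V;\Z)$. The twisted Thom isomorphism, or your explicit cellular boundary $\pm(1-\epsilon_e)$, then identifies this with the $\pi_1(X)$-coinvariants of $\Z^{w}$. This gives the integral answer directly and proves the preceding positive-rank lemma at the same time. It also makes transparent that only the low skeleton of $X$ and the orientation character of $V$ matter. The spectral sequence only pays off if you want higher $\pi_m$, which is not needed here.

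One point to tidy: a global presentation $V=E-\R^k$ with $E$ an honest bundle need not exist when $X$ is infinite-dimensional. For example, $\R-\mathrm{det}L^O_{n+1}$ over $BO(n+1)$ admits none, and it is one of the rank-$0$ cases the paper actually feeds into this lemma. The reason is that a stable complement of $\mathrm{det}L^O_{n+1}$ would have total Stiefel--Whitney class $\sum_i w_1^i$, which is nonzero in infinitely many degrees. You can fix this in either of two ways:
\begin{itemize}
\item Restrict first to the $2$-skeleton. The map $(X^{(2)})^{V}\to X^V$ is $2$-connected, since the remaining cells have dimension $\ge 3$ when $\operatorname{rank}V=0$, so $\pi_0$ is unchanged. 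Over a finite-dimensional complex the presentation $V=E-\R^k$ does exist.
\item Work directly with the cell structure on $X^V$ that has one cell of dimension $q+\operatorname{rank}V$ for each $q$-cell of $X$. This is all your cellular computation actually uses.
\end{itemize}
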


These two lemmas follow immediately from the Adams spectral sequence.

\begin{lemma}
Let $n$ be a non-negative integer and $m$ be a positive integer.
Then $\pi_{-m}(BO(n)^{-L_n^O})$ is isomorphic to the $(n-m)$-dimensional cobordism group, $\Omega_{n-m}^O$.
A similar theorem holds for $BSO$, $B\mathrm{Spin}$, and $B\mathrm{Pin}^{-}$.
Here, for convenience, we define $\Omega_{n-m}^O = 0$ when $n < m$.
\end{lemma}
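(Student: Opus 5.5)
Since $\pi_{-m}(BO(n)^{-L_n^O})$ is in negative degree, the plan is to read it geometrically rather than through the Adams spectral sequence. Write $-L_n^O = -(L_n^O\oplus\R^m) + \R^m$. Then
\[
\pi_{-m}(BO(n)^{-L_n^O}) \cong \pi_0(\Sigma^{m} BO(n)^{-L_n^O}) = \pi_0(BO(n)^{-(L_n^O-\R^m)}).
\]
By Theorem~\ref{thm:Pontryagin-Thom for spectrum} this is $\Omega(BO(n); L_n^O-\R^m)$, a monoid of closed $(n-m)$-manifolds $M$ with a map $g\colon M\to BO(n)$ and a stable isomorphism $TM\oplus\R^m\cong g^*L_n^O$. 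When $n<m$ the rank is negative and the group is $0$, because no manifolds of negative dimension exist. Equivalently, the Thom spectrum has cells only in degrees $\ge -n > -m$.

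For $n\ge m$, we compare with $\Omega^O_{n-m}$ via the forgetful map $[M,g,\Phi]\mapsto[M]$. It is well defined because a cobordism $(N,h,\Psi)$ is in particular an unoriented bordism.

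\textbf{Surjectivity.} Take a closed $(n-m)$-manifold $M$. The bundle $TM\oplus\R^m$ has rank $n$, so it is classified by some $g\colon M\to BO(n)$, and we set $\Phi=\mathrm{id}$.

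\textbf{Injectivity.} Suppose $M=\partial N$. Since $TN\oplus\R^{m-1}$ has rank $n$, it is classified by some $h\colon N\to BO(n)$, giving a direct cobordism to the empty set. The difficulty is that it must restrict on $M$ to the prescribed $(g,\Phi)$. The key point is that the data $(g,\Phi)$ contains no information. A pair consisting of a map to $BO(n)$ together with a stable isomorphism to $g^*L_n^O$ is the same as a lift of the stable tangent classifier $M\to BO$ through $BO(n)\to BO$. The homotopy fibre of $BO(n)\to BO$ is $O/O(n)$, which is $(n-1)$-connected.

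\textbf{Uniqueness of lifts.} Since $\dim M = n-m\le n-1$, lifts over $M$ exist and are unique up to homotopy. For the same reason, any two triples on the same $M$ are joined by the cylinder $M\times I$, which has dimension $n-m+1\le n$. Extending a given lift on $\partial N$ over $N$ is also unobstructed: the obstructions lie in $H^{k}(N,\partial N;\pi_{k-1}(O/O(n)))$, and these vanish for $k\le n$ because $\pi_{k-1}(O/O(n))=0$ when $k-1\le n-1$. Moreover $\dim N = n-m+1\le n$.

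So a null-bordism of $M$ lifts to a null-bordism of $(M,g,\Phi)$, and zigzags collapse in the same way. This gives a bijective monoid homomorphism onto $\Omega^O_{n-m}$.

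\textbf{Other structure groups.} The same argument works for $BSO$, $B\Spin$ and $B\Pin^{-}$. The relevant homotopy fibres are $SO/SO(n)$, $\Spin/\Spin(n)$ and $\Pin^-/\Pin^-(n)$, each identified with $O/O(n)$ in the range we need (for $n\ge 2$, or by checking small $n$ by hand). Using the corresponding tangential structures, the group becomes $\Omega^{SO}_{n-m}$, $\Omega^{\Spin}_{n-m}$ and $\Omega^{\Pin^-}_{n-m}$ respectively. For $\Pin^{-}$ one must check that the convention $TM\oplus\R^m$, rather than the normal bundle, matches the convention defining $\Omega^{\Pin^-}$; this is the same sign bookkeeping already built into Theorem~B2.

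\textbf{Main obstacle.} I expect the hardest step to be this connectivity and obstruction bookkeeping at the boundary case $m=1$, where $\dim N = n$ is exactly at the edge of the range. The plan is to use Lemma~\ref{lem:bundle iso2} in place of the abstract fibre argument there: its surjectivity part handles extension over the $n$-cells, and the relative obstruction vanishes since $\pi_{n-1}(O/O(n))=0$.
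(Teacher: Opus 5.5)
Your proof is correct, but it takes a different route from the paper's. The paper's argument is purely stable-homotopic. It applies the Smith fiber sequence to the sphere bundle $S(L^O_{n+1})\simeq BO(n)$, which gives $BO(n)^{-(L^O_n\oplus\R)}\to BO(n+1)^{-L^O_{n+1}}\to BO(n+1)^0$. Since the suspension spectrum $BO(n+1)^0$ has no negative homotopy, this yields $\pi_{-m}(BO(n)^{-L^O_n})\cong\pi_{-m-1}(BO(n+1)^{-L^O_{n+1}})$ for $m\ge1$. Iterating and passing to the colimit lands in $\pi_{n-m}(MTO)\cong\Omega^O_{n-m}$. The other structure groups are handled the same way, using the sphere-bundle identifications from the proof of Theorem~B.

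You instead use Theorem~\ref{thm:Pontryagin-Thom for spectrum} to get a geometric model, and then run obstruction theory along $BO(n)\to BO$, whose fibre $O/O(n)$ is $(n-1)$-connected. This is the unstable face of the same connectivity fact: the paper's cofibre sequence says exactly that $\Sigma^{n}BO(n)^{-L^O_n}\to\Sigma^{n+1}BO(n+1)^{-L^O_{n+1}}$ is an isomorphism on $\pi_k$ for $k\le n-1$.

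What each approach buys:
\begin{itemize}
\item The paper's version is shorter and purely formal. It never has to identify the data $(g,\Phi)$ with lifts or control the restriction along the collar.
\item Yours exhibits the isomorphism as the forgetful map $[M,g,\Phi]\mapsto[M]$. This is precisely the description the paper later uses implicitly, e.g.\ the left inverse $q\colon[M,g,\phi]\mapsto[M]$ in the proof of Theorem~\ref{thm:C1}.
\item Yours also shows why $m\ge1$ matters. For $m=0$ the top obstruction lies in $H^{n+1}(N,\partial N;\pi_n(O/O(n)))$ and need not vanish; this is what makes $\pi_0(BO(n)^{-L^O_n})$ Reinhart's group rather than $\Omega^O_n$.
\end{itemize}

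Finally, the case $m=1$ that you single out as the main obstacle is not actually an obstacle. There the top relative obstruction group is $H^{n}(N,\partial N;\pi_{n-1}(O/O(n)))=0$, so your general argument already covers it without any appeal to Lemma~\ref{lem:bundle iso2}.
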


\begin{proof}
Consider the fibration of spectra:
$$
BO(n)^{-(L_n^O\oplus \R)} \to BO(n+1)^{-L_{n+1}^O} \to BO(n+1)^0
$$
This induces the following long exact sequence of homotopy groups:
$$
\pi_{-m}(BO(n+1)^0) \to \pi_{-m}(BO(n)^{-L_n^O}) \to \pi_{-(m+1)}(BO(n+1)^{-L_{n+1}^O}) \to \pi_{-(m+1)}(BO(n+1)^0)
$$
The first and last terms are zero by Lemma~5.1.
Therefore, we obtain the isomorphism
$$
\pi_{-m}(BO(n)^{-L_n^O}) \cong \pi_{-(m+1)}(BO(n+1)^{-L_{n+1}^O}).
$$
By iterating this process, we get
$$
\pi_{-m}(BO(n)^{-L_n^O}) = \pi_{n-m}(BO(n)^{\R^{n}-L_{n}^O}) \cong \pi_{n-m}(MTO) \cong \Omega^O_{n-m}.
$$
This completes the proof.
\end{proof}

\begin{remark}
It is known, due to Ebert \cite{Ebert13}, that $\pi_{0}(BO(n)^{-L_n^O})$ and $\pi_{0}(BSO(n)^{-L_n^{SO}})$ are isomorphic to the cobordism groups defined by Reinhart in \cite{Reinhart63}.
Furthermore, \cite{BDS15} provides computations of $\pi_m(BSO(n)^{-L_n^{SO}})$ for $0\leq m\leq 3$ using the Adams spectral sequence.
\end{remark}

Using these results, by examining the homotopy groups of degrees $-2$ and $-1$ in Theorem~\ref{thm:B1}, we construct the following commutative diagram in which every row and column is exact.

\begin{tikzcd}
\pi_0(BSO(n+1)^{-L^{SO}_{n+1}}) \arrow[r] \arrow[d] & \pi_{-1}(BO(n+1)^{-(L^{O}_{n+1}\oplus \mathrm{det} L^{O}_{n+1})}) \arrow[r, "r'", two heads] \arrow[d, "p"] & \Omega_n^O \arrow[d] \\
\Z \arrow[r] \arrow[d] & \Z/2\Z \arrow[r] \arrow[d, "s"] & 0\arrow[d] \\
\pi_0(BSO(n)^{-L^{SO}_{n}}) \arrow[r] \arrow[d, two heads] & \pi_{-1}(BO(n)^{-(L^{O}_{n}\oplus \mathrm{det} L^{O}_{n})}) \arrow[r, "r'", two heads] \arrow[d, two heads] & \Omega_{n-1}^O \arrow[d, equal] \\
\Omega_n^{SO}  \arrow[r, shift left, hook] \arrow[d] & \pi_{-2}(BO(n+1)^{-(L^{O}_{n+1}\oplus \mathrm{det} L^{O}_{n+1})}) \arrow[l, shift left, two heads, "q"]\arrow[r, two heads, "r"] \arrow[d] & \Omega_{n-1}^O \arrow[d] \\
0 \arrow[r] & 0 \arrow[r] & 0
\end{tikzcd}

First, we consider the second row from the bottom.
The map $\Omega_n^{SO} \to \pi_{-2}(BO(n+1)^{-(L^{O}_{n+1}\oplus \mathrm{det} L^{O}_{n+1})})$ is induced by the bundle map $L^{SO}_{n+1}\oplus \R \to L^{O}_{n+1}\oplus \mathrm{det} L^{O}_{n+1}$. Therefore, an element $[M] \in \Omega_n^{SO}$ maps to the class $[M, g:M\to BO(n+1), TM\oplus \R^2 \stackrel{\phi}{\cong} g^{\ast}(L_{n+1}^O\oplus \mathrm{det} L_{n+1}^O)]$.
Here, $g$ is the classifying map for $TM\oplus \R$, and $\phi$ is the combination of the two natural isomorphisms $TM\oplus \R \cong g^{\ast} L_{n+1}^O$ and $\R \cong g^{\ast} \mathrm{det} L_{n+1}^O$.
This map admits a natural left inverse $q:[M, g, \phi] \mapsto [M]$.

By the same argument, the map $\Omega_{n-1}^{SO} \to \pi_{-3}(BO(n+1)^{-(L^{O}_{n+1}\oplus \mathrm{det} L^{O}_{n+1})})$ is also injective. Thus, by considering the exact sequence

\begin{tikzcd}
\pi_{-2}(BO(n)^{-(L^{O}_{n}\oplus \mathrm{det} L^{O}_{n})}) \arrow[r, "r"]  & \Omega_{n-1}^O \arrow[r, "0"] &\Omega_{n-1}^{SO} \arrow[r, hook] &\pi_{-3}(BO(n+1)^{-(L^{O}_{n+1}\oplus \mathrm{det} L^{O}_{n+1})})
\end{tikzcd}

we deduce the surjectivity of $r$.

Therefore, we find that the second row from the bottom splits, yielding the isomorphism
$$
r\oplus q: \pi_{-2}(BO(n+1)^{-(L^{O}_{n+1}\oplus \mathrm{det} L^{O}_{n+1})}) \xrightarrow{\cong} \Omega^O_{n-1} \oplus \Omega^{SO}_{n}.
$$
By composing this isomorphism with $\pi_{-1}(BO(n)^{-(L^{O}_{n}\oplus \mathrm{det} L^{O}_{n})})\to \pi_{-2}(BO(n)^{-(L^{O}_{n}\oplus \mathrm{det} L^{O}_{n})})$ , we obtain the map $\epsilon$.

From the construction of $q$, the second component of this map is clearly $[M,\eta] \mapsto [M]$.
Regarding the first component $r'$, it is known from \cite[Proposition~3.17]{Detal26} that for a class $[M, \eta]$, this map corresponds to taking a generic section $s$ of $\eta$ transverse to the zero section and returning the class $[s^{-1}(0)]$.

Thus, it remains to determine the kernel of this surjective group homomorphism $\epsilon$.
To achieve this, it suffices to determine whether the map
$p:\pi_{-1}(BO(n+1)^{-(L^{O}_{n+1}\oplus \mathrm{det} L^{O}_{n+1})})\to \Z/2\Z$
is the zero map or surjective.

Again, by \cite[Proposition~3.17]{Detal26}, $p$ is the map that assigns to a class $[M, \eta]$ the parity of the cardinality of $t^{-1}(0)$, where $t$ is a generic section of $\eta^{\perp}$ transverse to the zero section.
Since the dimension of $M$ and the rank of $\eta^{\perp}$ are both $n+1$, this value coincides with the pairing $\langle w_{n+1}(\eta^{\perp}), [M]\rangle$.

Here, by the properties of Stiefel-Whitney classes, the equality 
$$
w_{n+1}(\eta^{\perp}) = w_n(\eta^{\perp})w_1(\eta) + w_{n+1}(TM\oplus \R)
$$
holds.
On the right-hand side, the term $w_{n+1}(TM)$ corresponds to the parity of the Euler characteristic of $M$.
We now consider the term $w_n(\eta^{\perp})w_1(\eta)$.
Let $s$ be a generic section of $\eta$ transverse to the zero section, and consider the zero locus $Z = s^{-1}(0) \subset M$.
By construction, there is a canonical isomorphism between the normal bundle of $Z$ and the restriction $\eta|_{Z}$. Consequently, we have $TZ \oplus \R \cong \eta^{\perp}|_{Z}$.
Under the Gysin (Umkehr) homomorphism $H^n(Z;\Z/2\Z)\to H^{n+1}(M;\Z/2\Z)$, the class $w_n(\eta^{\perp}|_{Z}) = w_n(TZ)$ maps to $w_n(\eta^{\perp})w_1(\eta)$.
Therefore, $w_n(\eta^{\perp})w_1(\eta)$ corresponds to the parity of the Euler characteristic of $Z$ (which is $s^{-1}(0)$).

From the above, it follows that $p$ coincides with the composition of the map
$\epsilon:\pi_{-1}(BO(n+1)^{-(L^{O}_{n+1}\oplus \mathrm{det} L^{O}_{n+1})}) \to \Omega^O_{n} \oplus \Omega^{SO}_{n+1}$
and the map that takes the sum of the parities of the Euler characteristics of the two components.

Regarding closed manifolds:
\begin{itemize}
    \item For unoriented manifolds (corresponding to $\Omega^O_n$), the Euler characteristic is always 0 in odd dimensions due to the orientable double cover and Poincaré duality. In even dimensions, it can be odd, as seen by considering $\mathbb{RP}^{2n}$.
    \item For oriented closed manifolds (corresponding to $\Omega^{SO}_{n+1}$), the Euler characteristic is always even in odd dimensions due to Poincaré duality. Furthermore, in dimensions congruent to $2 \pmod 4$, the intersection form in the middle dimension is skew-symmetric, which implies the middle Betti number is even, so the Euler characteristic is also even. On the other hand, considering $\mathbb{CP}^{2n}$, we see that the Euler characteristic can be odd in dimensions divisible by 4.
\end{itemize}

Combining these facts, We see the map $p:\pi_{-1}(BO(n+1)^{-(L^{O}_{n+1}\oplus \mathrm{det} L^{O}_{n+1})}) \to \Z/2\Z$ is the zero map only when $n \equiv 1 \pmod 4$, and is surjective in all other cases.

This concludes the proof of Theorem~\ref{thm:C1}.
\end{proof}

\begin{proof}[Proof of Theorem~\ref{thm:C2}]
The proof proceeds in a manner analogous to that of Theorem~\ref{thm:C1}.

First, as indicated by the following fibration:
\begin{equation*}
\begin{tikzcd}
B\mathrm{Pin}^{-}(n) \arrow[r]  & BO(n) \arrow[r, "w_2+w_1^2"] & K(\Z/2\Z, 2)
\end{tikzcd}
\end{equation*}
the condition for a vector bundle $E$ to admit a $\mathrm{Pin}^{-}$ structure is $w_2(E) + w_1(E)^2 = 0$.
Therefore, since
\[
w_2(L^{\mathrm{Pin}^{-}}_n\oplus \mathrm{det} L^{\mathrm{Pin}^{-}}_n) = w_2(L^{\mathrm{Pin}^{-}}_n) + w_1(L^{\mathrm{Pin}^{-}}_n)w_1(\mathrm{det} L^{\mathrm{Pin}^{-}}_n) = w_2(L^{\mathrm{Pin}^{-}}_n) + w_1(L^{\mathrm{Pin}^{-}}_n)^2 = 0,
\]
the bundle $L^{\mathrm{Pin}^{-}}_n\oplus \mathrm{det} L^{\mathrm{Pin}^{-}}_n$ admits a $\mathrm{Spin}$ structure. We fix this $\mathrm{Spin}$ structure.

By Theorem~\ref{thm:A}, each element of $\Omega'(B\Pin^{-}(n); L^{\Pin^{-}}_n\oplus \mathrm{det} L^{\Pin^{-}}_n, 1)$ is represented by three components: an $n$-dimensional closed manifold $M$, a map $g: M \to B\mathrm{Pin}^{-}(n)$, and a stable isomorphism $\phi: TM\oplus \R \cong g^{*}(L^{\mathrm{Pin}^{-}}_n\oplus \mathrm{det} L^{\mathrm{Pin}^{-}}_n)$.
The fixed $\mathrm{Spin}$ structure on the target bundle induces a $\mathrm{Spin}$ structure on $M$ via the isomorphism $\phi$.
Furthermore, via the isomorphism $\phi$, we can identify a line subbundle $\eta$ of $TM \oplus \R$ corresponding to $\mathrm{det} L^{\mathrm{Pin}^{-}}_n$.

Conversely, given a pair $(M, \eta)$ of an $n$-dimensional closed $\mathrm{Spin}$ manifold $M$ and a line subbundle $\eta$ of $TM \oplus \R$, we have $w_2(\eta^{\perp} \oplus \eta) = 0$, which implies $w_2(\eta^{\perp}) = w_1(\eta^{\perp})w_1(\eta) = w_1(\eta^{\perp})^2$. Thus, $\eta^{\perp}$ admits a $\mathrm{Pin}^{-}$ structure.
Although there may be multiple choices for this $\mathrm{Pin}^{-}$ structure, we define $g: M \to B\mathrm{Pin}^{-}(n)$ and $\phi$ such that the bundle map from $TM \oplus \R = \eta^{\perp} \oplus \eta$ to $L^{\mathrm{Pin}^{-}}_n\oplus \mathrm{det} L^{\mathrm{Pin}^{-}}_n$ preserves the $\mathrm{Spin}$ structure.

Since these correspondences are inverses of each other, the first part of Theorem~\ref{thm:C2} is established.

The second part of the proof follows the same logic as in Theorem~\ref{thm:C1}, with $SO$ replaced by $\mathrm{Spin}$ and $O$ replaced by $\mathrm{Pin}^{-}$, up to the final step.
It remains to determine the values of $n$ for which the map $p: \pi_{-1}(B\mathrm{Pin}^{-}(n+1)^{-(L^{\mathrm{Pin}^{-}}_{n+1}\oplus \mathrm{det} L^{\mathrm{Pin}^{-}}_{n+1})}) \to \Omega^{\mathrm{Pin}^{-}}_{n} \oplus \Omega^{\mathrm{Spin}}_{n+1} \to \Z/2\Z$ is the zero map.

\begin{itemize}
    \item For closed $\mathrm{Pin}^{-}$ manifolds, the Euler characteristic is always 0 in odd dimensions by the orientable double cover and Poincaré duality. Furthermore, according to \cite[Table~2.2]{HSV25}, it is known that the Euler characteristic is always even in dimensions congruent to $4 \pmod 8$.
    On the other hand, considering $\mathbb{HP}^{2n}$ and $\mathbb{RP}^{4n+2}$, the Euler characteristic can be odd in dimensions congruent to $0, 2, 6 \pmod 8$.
    \item For closed $\mathrm{Spin}$ manifolds, as mentioned earlier, the Euler characteristic is always even in odd dimensions and in dimensions congruent to $2 \pmod 4$.
    When the dimension is congruent to $4 \pmod 8$, the parity of the Euler characteristic coincides with that of the middle Betti number, which in turn coincides with the parity of the signature (due to the intersection form). By Ochanine's theorem, the signature of a Spin manifold of dimension $4 \pmod 8$ is always divisible by 16; thus, the Euler characteristic must be even.
    On the other hand, considering $\mathbb{HP}^{2n}$, the Euler characteristic can be odd in dimensions divisible by 8.
\end{itemize}

Combining these facts, the map $p$ is the zero map only when $n \equiv 1, 3, 4, 5 \pmod 8$, and is surjective otherwise.
This concludes the proof of Theorem~\ref{thm:C2}.
\end{proof}

\begin{proof}[Proof of Theorem~\ref{thm:C3}]
The proof proceeds in a manner analogous to that of Theorem~\ref{thm:C1}.

First, note that $L^{\mathrm{Spin}^c}_{n}$ and $\mathcal{O}_{n}$ are both oriented vector bundles over $B\mathrm{Spin}^c(n)$. They are defined as the pullbacks of the canonical oriented vector bundle and the canonical complex line bundle via the maps
\[ B\mathrm{Spin}^c(n) \to BSO(n) \times BU(1) \xrightarrow{pr_1} BSO(n) \]
and
\[ B\mathrm{Spin}^c(n) \to BSO(n) \times BU(1) \xrightarrow{pr_2} BU(1), \]
respectively.
Since the map $B\mathrm{Spin}^c(n) \to BSO(n) \times BU(1)$ pulls back the non-zero elements of $H^2(BSO(n);\Z/2\Z) \cong \Z/2\Z$ and $H^2(BU(1);\Z/2\Z) \cong \Z/2\Z$ to the same element, we have
\[ w_2(L^{\mathrm{Spin}^c}_{n}\oplus \mathcal{O}_{n}) = w_2(L^{\mathrm{Spin}^c}_{n}) + w_2(\mathcal{O}_{n}) = 0. \]
Thus, the direct sum $L^{\mathrm{Spin}^c}_{n}\oplus \mathcal{O}_{n}$ admits a $\mathrm{Spin}$ structure. We fix this $\mathrm{Spin}$ structure.

By Theorem~\ref{thm:A}, each element of $\Omega'(B\textnormal{Spin}^c(n);L^{\textnormal{Spin}^c}_{n}\oplus \mathcal{O}_{n}, 2)$ is represented by three components: an $n$-dimensional closed manifold $M$, a map $g: M \to B\mathrm{Spin}^{c}(n)$, and a stable isomorphism $\phi: TM\oplus \R^2 \cong g^{*}(L^{\mathrm{Spin}^c}_{n}\oplus \mathcal{O}_{n})$.
The fixed $\mathrm{Spin}$ structure on the target bundle induces a $\mathrm{Spin}$ structure on $M$ via the isomorphism $\phi$.
Furthermore, via the isomorphism $\phi$, we can identify a rank 2 oriented subbundle $\eta$ of $TM \oplus \R^2$ corresponding to $\mathcal{O}_{n}$.

Conversely, given a pair $(M, \eta)$ of an $n$-dimensional closed $\mathrm{Spin}$ manifold $M$ and a rank 2 subbundle $\eta$ of $TM \oplus \R^2$, we have $w_2(\eta^{\perp}\oplus \eta) = 0$, which implies $w_2(\eta^{\perp}) = w_2(\eta)$. This condition allows $\eta$ to admit a $\mathrm{Spin}^{c}$ structure.
Although there may be multiple choices for this $\mathrm{Spin}^{c}$ structure, we define $g: M \to B\mathrm{Spin}^{c}(n)$ and $\phi$ such that the bundle map from $TM \oplus \R^2 = \eta^{\perp} \oplus \eta$ to $L^{\mathrm{Spin}^c}_{n}\oplus \mathcal{O}_{n}$ preserves the $\mathrm{Spin}$ structure.

Since these correspondences are inverses of each other, the first part of Theorem~\ref{thm:C3} is established.

Next, we show the second part. By taking the homotopy groups of degrees $-3$ and $-2$ in Theorem~\ref{thm:B3}, we construct the following commutative diagram where every row and column is exact.

\begin{equation*}
\begin{tikzcd}
\pi_0(B\mathrm{Spin}(n+1)^{-L^{\mathrm{Spin}}_{n+1}}) \arrow[r] \arrow[d] & \pi_{-2}(B\mathrm{Spin}^{c}(n+1)^{-(L^{\mathrm{Spin}^c}_{n+1}\oplus \mathcal{O}_{n+1})}) \arrow[r, "r'", two heads] \arrow[d, "p"] & \Omega_{n-1}^{\mathrm{Spin}^c} \arrow[d] \\
\Z \arrow[r, equal] \arrow[d] & \Z \arrow[r] \arrow[d] & 0\arrow[d] \\
\pi_0(B\mathrm{Spin}(n)^{-L^{\mathrm{Spin}}_{n}}) \arrow[r]  \arrow[d, two heads]  & \pi_{-2}(B\mathrm{Spin}^{c}(n)^{-(L^{\mathrm{Spin}^c}_{n}\oplus \mathcal{O}_{n})})\arrow[r, "r'", two heads] \arrow[d, two heads] & \Omega_{n-2}^{\mathrm{Spin}^c} \arrow[d, equal] \\
\Omega_n^{\mathrm{Spin}}  \arrow[r, shift left, hook] \arrow[d] & \pi_{-3}(B\mathrm{Spin}^{c}(n+1)^{-(L^{\mathrm{Spin}^c}_{n+1}\oplus \mathcal{O}_{n+1})}) \arrow[l, shift left, two heads, "q"]\arrow[r, two heads, "r"] \arrow[d] & \Omega_{n-2}^{\mathrm{Spin}^c} \arrow[d] \\
0 \arrow[r] & 0 \arrow[r] & 0
\end{tikzcd}
\end{equation*}

Based on the commutative diagram constructed above, we now proceed to extract information about the group $\pi_{-2}(B\mathrm{Spin}^{c}(n)^{-(L^{\mathrm{Spin}^c}_{n}\oplus \mathcal{O}_{n})})$.

As in the proof of Theorem~\ref{thm:C1}, the map $\Omega_n^{\mathrm{Spin}} \to \pi_{-3}(B\mathrm{Spin}^{c}(n+1)^{-(L^{\mathrm{Spin}^c}_{n+1}\oplus \mathcal{O}_{n+1})})$ admits a natural left inverse $q: [M, g, \phi] \mapsto [M]$.
The surjectivity of $r$ follows similarly.

Therefore, the second row from the bottom splits, yielding the isomorphism
\[ r \oplus q: \pi_{-3}(B\mathrm{Spin}^{c}(n+1)^{-(L^{\mathrm{Spin}^c}_{n+1}\oplus \mathcal{O}_{n+1})}) \xrightarrow{\cong} \Omega^{\mathrm{Spin}^c}_{n-2} \oplus \Omega^{\mathrm{Spin}}_{n}. \]
By composing this with the natural map $\pi_{-2}(B\mathrm{Spin}^{c}(n)^{-(L^{\mathrm{Spin}^c}_{n}\oplus \mathcal{O}_{n})}) \to \pi_{-3}(B\mathrm{Spin}^{c}(n+1)^{-(L^{\mathrm{Spin}^c}_{n+1}\oplus \mathcal{O}_{n+1})})$, we obtain the map $\epsilon$.
From the construction of $q$, the second component of this map is clearly $[M, \eta] \mapsto [M]$.
Regarding the first component $r'$, as mentioned before, it corresponds to taking a generic section $s$ of $\eta$ transverse to the zero section and returning $[s^{-1}(0)]$.

It remains to determine the kernel of this surjective group homomorphism $\epsilon$.
For this purpose, it suffices to determine the image of the map
\[ p: \pi_{-2}(B\mathrm{Spin}^{c}(n+1)^{-(L^{\mathrm{Spin}^c}_{n+1}\oplus \mathcal{O}_{n+1})}) \to \Z. \]
Again by \cite[Proposition~3.17]{Detal26}, $p$ is the map that assigns to a class $[M, \eta]$ the number of points in $t^{-1}(0)$, where $t$ is a generic section of $\eta^{\perp}$ transverse to the zero section.
Since the dimension of $M$ and the rank of $\eta^{\perp}$ are both $n+1$, and $\eta$ (and thus $\eta^{\perp}$) is oriented, this coincides with the pairing $\langle e(\eta^{\perp}), [M]\rangle$.

Since $2e = 0$ in odd dimensions (for the vector bundle, i.e., when $n+1$ is odd), $p$ is the zero map when $n$ is even.
When $n$ is odd, considering the element $[S^{n+1}, \R^2 \subset TS^{n+1}\oplus \R^2]$, its image under $p$ is the Euler characteristic of $S^{n+1}$, which is $2$.
Thus, $2$ is contained in the image of $p$. It remains only to consider the parity of the values of $p$.

The mod 2 reduction of the Euler class $e$ is $w_{n+1}$, and we have
\[ w_{n+1}(\eta^{\perp}) = w_{n-1}(\eta^{\perp})w_2(\eta) + w_{n+1}(TM\oplus \R^2). \]
On the right-hand side, $w_{n+1}(TM\oplus \R^2) = w_{n+1}(TM)$ corresponds to the parity of the Euler characteristic of $M$.
Regarding $w_{n-1}(\eta^{\perp})w_2(\eta)$, similar to Theorem~C1, if we take a generic section $s$ of $\eta$ transverse to the zero section, this term corresponds to the parity of the Euler characteristic of $s^{-1}(0)$.

Thus, the mod 2 reduction of $p$ coincides with the composition of $\epsilon: \pi_{-2}(B\mathrm{Spin}^{c}(n+1)^{-(L^{\mathrm{Spin}^c}_{n+1}\oplus \mathcal{O}_{n+1})}) \to \Omega^{\mathrm{Spin}^c}_{n-1} \oplus \Omega^{\mathrm{Spin}}_{n+1}$ and the map that takes the sum of the parities of the Euler characteristics.
As mentioned earlier, the Euler characteristic of a closed Spin manifold can be odd only in dimensions divisible by 8.
Regarding closed $\mathrm{Spin}^c$ manifolds, since they are oriented, the Euler characteristic can be odd only in dimensions divisible by 4. Indeed, $\mathbb{CP}^{2m}$ admits a $\mathrm{Spin}^c$ structure (as it is complex) and has an odd Euler characteristic.

Based on the above, the map $p: \pi_{-2}(B\mathrm{Spin}^{c}(n+1)^{-(L^{\mathrm{Spin}^c}_{n+1}\oplus \mathcal{O}_{n+1})}) \to \Z$ behaves as follows:
\begin{itemize}
    \item If $n$ is even, it is the zero map.
    \item If $n \equiv 3 \pmod 8$, its image consists of all even integers (since $n+1 \equiv 4 \pmod 8$).
    \item If $n \equiv 1, 5, 7 \pmod 8$, it is surjective.
\end{itemize}
This concludes the proof of Theorem~\ref{thm:C3}.
\end{proof}

\section{Application}

First, as a consequence of Theorem~\ref{thm:C2}, we obtain the following result.

\begin{thm}
Consider the following condition for a non-negative integer $n$:
"There exists a pair $(M, \eta)$ consisting of an $(n+1)$-dimensional compact $\mathrm{Spin}$ manifold $M$ with boundary $\partial M = S^n$ and a line subbundle $\eta$ of $TM$ such that $\eta|_{S^n} = (TS^n)^{\perp}$ (i.e., $\eta$ is orthogonal to the boundary along $S^n$)."

Then, this condition holds if and only if $n \equiv 0, 2, 6, 7 \pmod 8$.
\end{thm}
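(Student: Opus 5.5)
The plan is to translate the geometric condition into a statement about one element of $\Omega^{\text{Spin, line}}_n$, and then read off the answer from Theorem~\ref{thm:C2}. Give $S^n$ its unique Spin structure for $n\ge 2$; for $n=0,1$ this needs separate care, since $S^1$ should carry the bounding structure. Consider the pair $(S^n,\eta_0)$ with $\eta_0=\R\subset TS^n\oplus\R$, the trivial summand. The condition of the theorem says exactly that $(S^n,\eta_0)$ is directly cobordant to $\emptyset$ in the sense of Definition~\ref{def:Spin line_cobordism}. Here $M$ is the cobordism, $\eta$ is the line subbundle of $TM$, and under $TM|_{S^n}\cong TS^n\oplus\R$ the normal line is the trivial summand. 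For $n\neq 1$ direct cobordism is already an equivalence relation (the analogue of Theorem~\ref{thm:A}), so the condition is equivalent to $[S^n,\eta_0]=0$ in $\Omega^{\text{Spin, line}}_n$. For $n=1$ I would check the statement by hand; $n=1$ should give nonexistence, and this is the main subtlety there.

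Next I apply $\epsilon$ from Theorem~\ref{thm:C2}. A generic section of the trivial line bundle $\eta_0$ is a nonzero constant, so $s^{-1}(0)=\emptyset$. Also $[S^n]=0$ in $\Omega^{\Spin}_n$, because $S^n$ bounds $D^{n+1}$ (again using the bounding structure for $n=1$). Hence $\epsilon([S^n,\eta_0])=0$, so $[S^n,\eta_0]\in\Ker\,\epsilon$. When $n\equiv 0,2,6,7 \pmod 8$ the kernel vanishes, so the class is zero and the required $(M,\eta)$ exists. This gives the "if" direction.

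For the converse I must show that $[S^n,\eta_0]$ is the \emph{generator} of $\Ker\,\epsilon\cong\Z/2\Z$ when $n\equiv 1,3,4,5\pmod 8$. To do this I trace the diagram of Theorem~\ref{thm:B2}. In the analogue of the diagram in the proof of Theorem~\ref{thm:C1}, the kernel of $\epsilon$ is the image of $\Z/2\Z\to\pi_{-1}(B\Pin^-(n)^{-(L_n\oplus\det L_n)})$, coming from $\pi_0(B\Pin^-(n+1)^{-\det L_{n+1}})$. The generator of that $\Z/2\Z$ is the point class. By the Smith/fibration description (\cite[Proposition~3.17]{Detal26}), the boundary map sends a point to the sphere bundle of the fiber, i.e. $S^n$ with the line field given by the radial normal direction. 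That is exactly $(S^n,\eta_0)$, up to the identification $TS^n\oplus\R\cong\R^{n+1}$. So $[S^n,\eta_0]$ is the image of the generator. It is nonzero precisely when $p$ is the zero map, which by the proof of Theorem~\ref{thm:C2} happens exactly for $n\equiv 1,3,4,5\pmod 8$.

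I expect the main obstacle to be this identification of the connecting map with the "sphere around a point" class, including checking orientations and Spin structures. As a fallback I would argue directly with a Poincaré--Hopf-type invariant. If $(M,\eta)$ bounds $(S^n,\eta_0)$, then glue $M$ to $D^{n+1}$ to get a closed Spin manifold $W$ with a line field $\eta$ that is singular at one point. The mod 2 count of that singularity is $\chi(W)$ plus the Euler-characteristic parity of the zero set of a section of $\eta$. This is the same computation that identifies $p$. In the excluded residues both parities are forced to be even, which gives a contradiction with the odd local index of the radial singularity.
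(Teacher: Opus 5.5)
Your proposal is correct and is essentially the paper's argument: the paper likewise reduces the condition to the vanishing of $[S^n,\R\subset TS^n\oplus\R]$ in $\Omega^{\textnormal{Spin, line}}_n$, identifies this class with the image of $1$ under the connecting map $s\colon \Z/2\Z\to\pi_{-1}(B\Pin^{-}(n)^{-(L^{\Pin^{-}}_n\oplus \mathrm{det} L^{\Pin^{-}}_n)})$ (citing \cite{Ebert13} rather than \cite{Detal26}), and concludes that it vanishes iff $p$ is surjective, i.e.\ iff $n\equiv 0,2,6,7 \pmod 8$. Your separate worry about $n=1$ is unnecessary: a direct null-cobordism always kills the class in the zigzag-generated group, and the ``if'' direction is never needed for $n=1$, so no hand check is required.
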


\begin{proof}
This proposition holds if and only if the element $[S^n, \R\subset TS^n\oplus \R]$ vanishes in $\Omega^{\textnormal{Spin, line}}_n$.
Combining \cite[Prop.~A.5]{Ebert13} with the commutative diagram used in the proof of Theorem~\ref{thm:C2}, we observe that the image of $1 \in \Z/2\Z$ under the map $s: \Z/2\Z \to \pi_{-1}(B\mathrm{Pin}^{-}(n)^{-(L^{\mathrm{Pin}^{-}}_n\oplus \mathrm{det} L^{\mathrm{Pin}^{-}}_n)})$ coincides with $[S^n, \R\subset TS^n\oplus \R]$.
Therefore, the vanishing of this element is equivalent to the surjectivity of the map $p$ (discussed in the proof of Theorem~\ref{thm:C2}).
As established in the proof of Theorem~\ref{thm:C2}, this occurs if and only if $n \equiv 0, 2, 6, 7 \pmod 8$.
This concludes the proof.
\end{proof}

\begin{remark}
If we remove the $\mathrm{Spin}$ constraint, we can take $M = \mathbb{RP}^{n+1} \setminus \mathrm{int}(D^{n+1})$.
Indeed, consider the compact manifold obtained by removing open neighborhoods of the north and south poles from $S^{n+1}$, equipped with a nowhere-vanishing vector field pointing from the north pole to the south pole.
By identifying antipodal points, the vectors at identified points differ by a factor of $-1$.
This allows us to construct a line subbundle of the tangent bundle on $\mathbb{RP}^{n+1} \setminus \mathrm{int}(D^{n+1})$ that is orthogonal to the boundary.

Characteristic class computations show that $\mathbb{RP}^{n+1}$ is oriented when $n$ is even, and admits a $\mathrm{Spin}$ structure when $n \equiv 2 \pmod 4$. Thus, for $n \equiv 2, 6 \pmod 8$, this provides the desired construction.

When $n$ is odd, according to \cite{CG17}, the given condition is equivalent to the Euler characteristic of $M$ being zero. Consequently, the proposition is equivalent to the existence of a closed $(n+1)$-dimensional $\mathrm{Spin}$ manifold with Euler characteristic $1$.
As mentioned previously, such manifolds exist only when $n \equiv 7 \pmod 8$ (i.e., when the dimension $n+1$ is divisible by 8), in which case $M = \mathbb{HP}^{\frac{n+1}{4}} \setminus \mathrm{int}(D^{n+1})$ satisfies the condition.

Clearly, for $n=0$, we can simply take $M=D^1$.
On the other hand, the author is not aware of how to construct such an $M$ when $n$ is a multiple of 8 other than 0, nor is there an alternative proof known to the author explaining why such an $M$ does not exist when $n \equiv 4 \pmod 8$.
Constraints on line fields are detailed in \cite{CG17}, but for even $n$, strong constraints have not yet been obtained, and the problem remains open.
\end{remark}

Finally, we conclude this paper by proving the result of \cite{BCT24} using Theorem~\ref{thm:C3}.

First, the cobordism group introduced in \cite{BCT24} can be redefined in the context of this paper as follows.

\begin{defi}
Let $n$ be a positive integer.
We consider the set of $n$-dimensional closed $\mathrm{Spin}$ manifolds $M$ such that the Euler characteristic of every connected component is $0$.
We write $M_1 \sim M_2$ if there exists a pair $(W, \xi)$ satisfying the following conditions:
\begin{itemize}
    \item $W$ is a compact $(n+1)$-dimensional $\mathrm{Spin}$ manifold with boundary $\partial W = M_1 \cup M_2$.
    \item $\xi$ is a rank 2 oriented subbundle of $TW$.
    \item Restricted to $M_i$, $\xi$ decomposes as the direct sum of the trivial line bundle $\R$ (which is orthogonal to the boundary) and a trivial line subbundle $\eta$ of $TM_i$.
\end{itemize}
We denote by $\Omega_{2, n-1}^{\mathrm{Spin}_0}$ the cobordism group defined by taking the quotient by this equivalence relation.
\end{defi}

In the following, we rephrase this equivalence relation in a simpler form.

First, let $M_1$ and $M_2$ be any two closed $\mathrm{Spin}$ manifolds such that the Euler characteristic of every connected component is $0$.
We prove the following lemma.

\begin{lemma}
The relation $M_1 \sim M_2$ holds if and only if the images of $[M_1, \R \subset TM_1 \oplus \R]$ and $[M_2, \R \subset TM_2 \oplus \R]$ in $\pi_0(B\mathrm{Spin}(n)^{-L^{\mathrm{Spin}}_{n}})$ coincide under the map
\[ \pi_0(B\mathrm{Spin}(n)^{-L^{\mathrm{Spin}}_{n}}) \to \pi_{-2}(B\mathrm{Spin}^{c}(n)^{-(L^{\mathrm{Spin}^c}_{n}\oplus \mathcal{O}_{n})}) \]
obtained by taking the homotopy groups (with appropriate degree shifts) of the commutative diagram in Theorem~\ref{thm:B3}.
\end{lemma}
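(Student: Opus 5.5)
The plan is to translate both sides of the claimed equivalence into statements about the cobordism group $\Omega^{\textnormal{Spin, plane}}_n$ via Theorem~\ref{thm:C3}, and then compare the relation $\sim$ with the direct cobordism relation of Definition~\ref{def:Spin plane_cobordism}.

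First, identify the map $\pi_0(B\mathrm{Spin}(n)^{-L^{\mathrm{Spin}}_{n}}) \to \pi_{-2}(B\mathrm{Spin}^{c}(n)^{-(L^{\mathrm{Spin}^c}_{n}\oplus \mathcal{O}_{n})})$ geometrically. By Theorem~\ref{thm:A}, the source is $\Omega'(B\mathrm{Spin}(n); L^{\mathrm{Spin}}_n\oplus\R^2,2)$ (after the shift $\pi_0(X^{-L})=\pi_{-2}(X^{-(L\oplus\R^2)})$). The target is $\Omega^{\textnormal{Spin, plane}}_n$ by Theorem~\ref{thm:C3}. The map is induced by the bundle map $L^{\mathrm{Spin}}_n\oplus\R^2\to L^{\mathrm{Spin}^c}_n\oplus\mathcal{O}_n$ coming from $B\mathrm{Spin}(n)\to B\mathrm{Spin}^c(n)$, under which $\R^2$ maps to $\mathcal{O}_n$ trivialized. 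So a class $[M,\R\subset TM\oplus\R]$ (viewed as $M$ with its spin structure and the tautological isomorphism $TM\oplus\R^2$) is sent to $[M,\R^2\subset TM\oplus\R^2]$, the trivial oriented plane field. Both classes in the statement are therefore $[M_i,\R^2]\in\Omega^{\textnormal{Spin, plane}}_n$, and the lemma reduces to the claim that $M_1\sim M_2$ if and only if $[M_1,\R^2]=[M_2,\R^2]$ there.

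Next, use the Euler characteristic hypothesis. Since every component of $M_i$ has $\chi=0$, $TM_i$ has a nowhere-vanishing vector field. This gives a trivial line subbundle $\eta_i\subset TM_i$. A homotopy of plane fields rotating the last $\R$ factor of $\R^2\subset TM_i\oplus\R^2$ into $\eta_i$ is realized by a direct cobordism $M_i\times[0,1]$. Hence $[M_i,\R^2]=[M_i,\eta_i\oplus\R]$, where the second $\R$ is the normal direction, i.e.\ the plane $\R\oplus\eta_i\subset TM_i\oplus\R\subset TM_i\oplus\R^2$. A pair $(W,\xi)$ as in the definition of $\sim$, with $\xi\subset TW\subset TW\oplus\R$, is then exactly a direct cobordism in $\Omega^{\textnormal{Spin, plane}}_n$ from $(M_1,\R\oplus\eta_1)$ to $(M_2,\R\oplus\eta_2)$, after reversing the normal on one end. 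The orientation of $\R$ relative to the normal is fixed by the convention, and possibly $\eta_2$ must be replaced by $-\eta_2$, which is also a nowhere-zero field. This gives the forward implication. For the converse, Theorem~\ref{thm:A} says the direct cobordism relation is already an equivalence relation for $n\neq1$, so equality of the classes produces a single $(W,\zeta)$ with $\zeta\subset TW\oplus\R$ oriented of rank $2$, restricting to $\R\oplus\eta_i$.

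The main obstacle is the converse step: $\zeta$ lives in $TW\oplus\R$ rather than in $TW$, and the boundary data might not match after independently chosen $\eta_i$. I would try to push $\zeta$ into $TW$ by obstruction theory. Since $W$ has nonempty boundary, it is homotopy equivalent to an $n$-complex, so a rank-$2$ subbundle of the rank-$(n+2)$ bundle $TW\oplus\R$ can be compressed off the extra $\R$ direction relative to the boundary. This is the argument of Lemma~\ref{lem:bundle iso2}, with obstructions in $\pi_r$ of the relevant Stiefel quotient, which vanish in the range $r\le n$. The top cells near the boundary need the same puncture-and-excise trick used in the proof of Theorem~\ref{thm:A}. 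The spheres this produces carry $\chi$-type obstructions, which I would absorb by changing the choice of $\eta_i$ on components. This is where $\chi(M_i)=0$ enters again. The case $n=1$ needs the zigzag version, handled by gluing along the intermediate stages.
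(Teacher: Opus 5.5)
Your reduction of both sides to $\Omega^{\textnormal{Spin, plane}}_n$ and your forward implication are fine. A pair $(W,\xi)$ with $\xi\subset TW\subset TW\oplus\R$ is a direct cobordism for the plane groups, and this is the paper's necessity argument (commutativity of the square between the left and right columns of Theorem~\ref{thm:B3}) phrased geometrically.

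The gap is in the converse. You claim that compressing $\zeta\subset TW\oplus\R$ into $TW$ relative to $\partial W$ meets obstructions only above degree $n$. That is false. Pushing $\zeta$ off the extra unit vector $e$ amounts to making the projection of $e$ onto the rank-$n$ bundle $\zeta^{\perp}$ nowhere zero; it equals $e$ on $\partial W$. So the relevant fibre is $S^{n-1}$. Equivalently, the pair $(\widetilde{\mathrm{Gr}}_2(\R^{n+2}),\widetilde{\mathrm{Gr}}_2(\R^{n+1}))$ is only $(n-1)$-connected, unlike the pair $(GL(n+2),GL(n+1))$ behind Lemma~\ref{lem:bundle iso2}. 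The primary obstruction is a relative Euler class in $H^n(W,\partial W;\Z)\cong H_1(W;\Z)$. Concretely, the zero set of that projection is a closed $1$-manifold $C$ in the interior of $W$, not finitely many points. Consequences:
\begin{itemize}
\item The puncture-and-excise trick from Theorem~\ref{thm:A} does not apply.
\item The fact that $W$ has the homotopy type of an $n$-complex does not help: an $n$-complex already sees this degree-$n$ obstruction, and the problem is relative to $\partial W$ anyway.
\item Nothing in your sketch removes $C$; the ``$\chi$-type spheres'' you anticipate are not where the difficulty lies.
\end{itemize}

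The missing idea is the Smith long exact sequence. The circles $C$ carry a stable isomorphism $TC\oplus\R\cong\zeta|_C$, i.e.\ a class in $\pi_{-1}(B\Spin^c(n)^{-\mathcal{O}_n})$. The right-hand column of Theorem~\ref{thm:B3}, with $n-1$ in place of $n$, identifies the kernel of
$\pi_{-1}(B\Spin^c(n-1)^{-(L^{\Spin^c}_{n-1}\oplus\mathcal{O}_{n-1})})\to\pi_{-2}(B\Spin^c(n)^{-(L^{\Spin^c}_{n}\oplus\mathcal{O}_{n})})$ with the image of that group. This map is exactly the passage from $\xi\subset TW$ to $\zeta\subset TW\oplus\R$.

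The paper's sufficiency proof runs on this exactness. The difference of the classes $[M_i,\eta_i\oplus\R]$ lies in that image. A possible nonzero difference is then killed by changing $\eta_1$ via the left column; that is what your ``re-choose $\eta_i$'' step would have to become. You should replace your cellwise obstruction argument by this exact-sequence argument.

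(In fact one can check that $\pi_{-1}(B\Spin^c(n)^{-\mathcal{O}_n})=0$ for $n\ge 2$, although the paper's diagram shows $\Z/2\Z$ in that position. The reason: $B\Spin^c(n)$ is simply connected and $c_1(\mathcal{O}_n)$ is odd on some class in $\pi_2$, so even a Lie-framed circle bounds a disc. So the column map is injective and a cobordism with $\xi\subset TW$ does exist. It comes from capping off $C$ via this bordism computation, not from cellwise obstructions vanishing.)
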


\begin{proof}
First, we show necessity.
Assume $M_1 \sim M_2$.
By the definition of the equivalence relation, we can choose trivial line subbundles $\eta_1 \subset TM_1$ and $\eta_2 \subset TM_2$ such that the classes
\[ [M_1, \eta_1\oplus \R\subset TM_1\oplus \R] \quad \text{and} \quad [M_2, \eta_2\oplus \R\subset TM_2\oplus \R] \]
define the same element in $\pi_{-1}(B\mathrm{Spin}^{c}(n-1)^{-(L^{\mathrm{Spin}^c}_{n-1}\oplus \mathcal{O}_{n-1})})$.

Consider the following commutative diagram obtained from Theorem~\ref{thm:B3}:
\begin{equation*}
\begin{tikzcd}
\Z/2\Z \arrow[r, equal] \arrow[d] & \Z/2\Z \arrow[d]\\
\pi_1(B\mathrm{Spin}(n-1)^{-L^{\mathrm{Spin}}_{n-1}}) \arrow[r] \arrow[d] & \pi_{-1}(B\mathrm{Spin}^{c}(n-1)^{-(L^{\mathrm{Spin}^c}_{n-1}\oplus \mathcal{O}_{n-1})}) \arrow[d] \\
\pi_0(B\mathrm{Spin}(n)^{-L^{\mathrm{Spin}}_{n}}) \arrow[r] & \pi_{-2}(B\mathrm{Spin}^{c}(n)^{-(L^{\mathrm{Spin}^c}_{n}\oplus \mathcal{O}_{n})})
\end{tikzcd}
\end{equation*}

In the second commutative square from the top, consider the images of $[M_1, \eta_1\oplus \R\subset TM_1\oplus \R]$ and $[M_2, \eta_2\oplus \R\subset TM_2\oplus \R]$.
Since they map to the same element in the horizontal direction (to the right), they must map to the same element in the composition
\[ \pi_1(B\mathrm{Spin}(n-1)^{-L^{\mathrm{Spin}}_{n-1}})\to \pi_0(B\mathrm{Spin}(n)^{-L^{\mathrm{Spin}}_{n}})\to \pi_{-2}(B\mathrm{Spin}^{c}(n)^{-(L^{\mathrm{Spin}^c}_{n}\oplus \mathcal{O}_{n})}). \]
The map $\pi_1(B\mathrm{Spin}(n-1)^{-L^{\mathrm{Spin}}_{n-1}})\to \pi_0(B\mathrm{Spin}(n)^{-L^{\mathrm{Spin}}_{n}})$ corresponds to forgetting one dimension of a rank 2 trivial subbundle (stabilization). Thus, the classes $[M_i, \R \subset TM_i \oplus \R]$ map to the same element in the final group. This proves necessity.

Next, we show sufficiency.
Since the Euler characteristic of every connected component of $M_1$ and $M_2$ is $0$, we can choose trivial line subbundles $\eta_i$ of $TM_i$.
By assumption, the images of $[M_i, \eta_i\oplus \R\subset TM_i\oplus \R]$ coincide when mapped to $\pi_{-2}(B\mathrm{Spin}^{c}(n)^{-(L^{\mathrm{Spin}^c}_{n}\oplus \mathcal{O}_{n})})$.
Consequently, the difference
\[ [M_1, \eta_1\oplus \R\subset TM_1\oplus \R] - [M_2, \eta_2\oplus \R\subset TM_2\oplus \R] \]
in $\pi_{-1}(B\mathrm{Spin}^{c}(n-1)^{-(L^{\mathrm{Spin}^c}_{n-1}\oplus \mathcal{O}_{n-1})})$ lies in the image of the map from $\Z/2\Z$.

If this element is $0$, then $M_1 \sim M_2$ holds by the definition of the equivalence relation.
If this element is the image of the non-zero element of $\Z/2\Z$, we can adjust it using the image of $1$ under the map $\Z/2\Z \to \pi_1(B\mathrm{Spin}(n-1)^{-L^{\mathrm{Spin}}_{n-1}})$.
Specifically, we can replace $[M_1, \eta_1\oplus \R\subset TM_1\oplus \R]$ with another element $[M_1, \eta'_1\oplus \R\subset TM_1\oplus \R]$ such that it maps to the same element $[M_1, \R \subset TM_1 \oplus \R]$ in $\pi_0(B\mathrm{Spin}(n)^{-L^{\mathrm{Spin}}_{n}})$ (the base class remains unchanged), but differs by the non-trivial element in the fiber.
This adjustment ensures that the classes map to the same element in $\pi_{-1}(B\mathrm{Spin}^{c}(n-1)^{-(L^{\mathrm{Spin}^c}_{n-1}\oplus \mathcal{O}_{n-1})})$.
This proves sufficiency.
\end{proof}

Using this lemma, we observe that the difference
\[ [M_1, \R\subset TM_1\oplus \R] - [M_2, \R\subset TM_2\oplus \R] \in \pi_0(B\mathrm{Spin}(n)^{-L^{\mathrm{Spin}}_{n}}) \]
vanishes in $\pi_{-2}(B\mathrm{Spin}^{c}(n)^{-(L^{\mathrm{Spin}^c}_{n}\oplus \mathcal{O}_{n})})$.
Consequently, it naturally maps to $0$ in $\Omega_n^{\mathrm{Spin}}$ (via the map to the base space).
Thus, this difference lies in the image of the map from $\Z$ (from the vertical exact sequence).
However, since it vanishes in $\pi_{-2}(B\mathrm{Spin}^{c}(n)^{-(L^{\mathrm{Spin}^c}_{n}\oplus \mathcal{O}_{n})})$, the integer pre-image must lie in the image of $p$ (from the horizontal structure).

We analyze this situation using the following extended commutative diagram:

\begin{equation*}
\begin{tikzcd}
\Z/2\Z \arrow[r, equal] \arrow[d] & \Z/2\Z \arrow[r] \arrow[d] & 0\arrow[d] \\
\pi_1(B\mathrm{Spin}(n)^{-L^{\mathrm{Spin}}_{n}}) \arrow[r] \arrow[d] & \pi_{-1}(B\mathrm{Spin}^{c}(n)^{-(L^{\mathrm{Spin}^c}_{n}\oplus \mathcal{O}_{n})}) \arrow[r] \arrow[d] & \Omega_{n-1}^{\mathrm{Spin}^c} \arrow[d, equal] \\
\pi_0(B\mathrm{Spin}(n+1)^{-L^{\mathrm{Spin}}_{n+1}}) \arrow[r] \arrow[d] & \pi_{-2}(B\mathrm{Spin}^{c}(n+1)^{-(L^{\mathrm{Spin}^c}_{n+1}\oplus \mathcal{O}_{n+1})}) \arrow[r, "r'", two heads] \arrow[d, "p"] & \Omega_{n-1}^{\mathrm{Spin}^c} \arrow[d] \\
\Z \arrow[r, equal] \arrow[d, "t"] & \Z \arrow[r] \arrow[d] & 0\arrow[d] \\
\pi_0(B\mathrm{Spin}(n)^{-L^{\mathrm{Spin}}_{n}}) \arrow[r, "u"]  \arrow[d, two heads]  & \pi_{-2}(B\mathrm{Spin}^{c}(n)^{-(L^{\mathrm{Spin}^c}_{n}\oplus \mathcal{O}_{n})})\arrow[r, "r'", two heads] \arrow[d, two heads] & \Omega_{n-2}^{\mathrm{Spin}^c} \arrow[d, equal] \\
\Omega_n^{\mathrm{Spin}}  \arrow[r, shift left, hook] \arrow[d] & \pi_{-3}(B\mathrm{Spin}^{c}(n+1)^{-(L^{\mathrm{Spin}^c}_{n+1}\oplus \mathcal{O}_{n+1})}) \arrow[l, shift left, two heads, "q"]\arrow[r, two heads, "r"] \arrow[d] & \Omega_{n-2}^{\mathrm{Spin}^c} \arrow[d] \\
0 \arrow[r] & 0 \arrow[r] & 0
\end{tikzcd}
\end{equation*}

Based on this diagram and previous considerations, we state the reformulation of $M_1 \sim M_2$ depending on the value of $n$:

\begin{itemize}
    \item When $n$ is even, $p$ is the zero map. Thus, the map $u$ is injective.
    Therefore, $M_1 \sim M_2$ is equivalent to
    \[ [M_1, \R\subset TM_1\oplus \R] = [M_2, \R\subset TM_2\oplus \R] \in \pi_0(B\mathrm{Spin}(n)^{-L^{\mathrm{Spin}}_{n}}). \]
    For even $n$, equality as elements of $\pi_0(B\mathrm{Spin}(n)^{-L^{\mathrm{Spin}}_{n}})$ is equivalent to equality as elements of $\Omega_n^{\mathrm{Spin}}$ combined with the equality of their Euler characteristics \cite{Reinhart63, Ebert13, HSV25}.
    Since the condition on the Euler characteristic is clearly satisfied by assumption (both are 0), $M_1 \sim M_2$ is equivalent to $[M_1] = [M_2] \in \Omega_n^{\mathrm{Spin}}$.

    \item When $n \equiv 1, 5, 7 \pmod 8$, $p$ is surjective.
    Therefore, $M_1 \sim M_2$ is equivalent to
    \[ [M_1, \R\subset TM_1\oplus \R] - [M_2, \R\subset TM_2\oplus \R] \in \mathrm{Im}(t). \]
    Due to the exactness of the left vertical column, this is equivalent to $[M_1] = [M_2] \in \Omega_n^{\mathrm{Spin}}$.

    \item When $n \equiv 3 \pmod 8$, the image of $p$ consists of all even integers.
    Thus, $M_1 \sim M_2$ is equivalent to
    \[ [M_1, \R\subset TM_1\oplus \R] - [M_2, \R\subset TM_2\oplus \R] \in t(2\Z). \]
    Here, since the Euler characteristic of $S^{n+1}$ is 2, the element $2 \in \Z$ lies in the kernel of $t$ (as the boundary of the disk bundle).
    Consequently, $t(2\Z) = 0$, and in this case, $M_1 \sim M_2$ is equivalent to
    \[ [M_1, \R\subset TM_1\oplus \R] = [M_2, \R\subset TM_2\oplus \R] \in \pi_0(B\mathrm{Spin}(n)^{-L^{\mathrm{Spin}}_{n}}). \]
    Furthermore, since $n$ is odd and the Euler characteristic of an $(n+1)$-dimensional closed $\mathrm{Spin}$ manifold cannot be odd, equality in $\pi_0(B\mathrm{Spin}(n)^{-L^{\mathrm{Spin}}_{n}})$ is equivalent to equality in $\Omega_n^{\mathrm{Spin}}$ combined with the equality of the Kervaire semi-characteristic $\hat{\chi}_{\Z/2\Z}$ \cite{Ebert13, HSV25}.
\end{itemize}

Summarizing the above:
\begin{itemize}
    \item If $n \not\equiv 3 \pmod 8$, then $M_1 \sim M_2$ is equivalent to $[M_1] = [M_2] \in \Omega_n^{\mathrm{Spin}}$.
    \item If $n \equiv 3 \pmod 8$, then $M_1 \sim M_2$ is equivalent to $[M_1] = [M_2] \in \Omega_n^{\mathrm{Spin}}$ and $\hat{\chi}_{\Z/2\Z}(M_1) = \hat{\chi}_{\Z/2\Z}(M_2) \in \Z/2\Z$.
\end{itemize}
This result provides a more precise result than the one presented in \cite{BCT24}.

\bibliography{Thomspectra}

@article{Thom54,
  author  = {Thom, Ren{\'e}},
  title   = {Quelques propri{\'e}t{\'e}s globales des vari{\'e}t{\'e}s diff{\'e}rentiables},
  journal = {Commentarii Mathematici Helvetici},
  volume  = {28},
  issue   = {1},
  year    = {1954},
  pages   = {17--86},
  doi     = {10.1007/BF02566923}
}

@article{Detal26,
  author  = {Debray, Arun and Devalapurkar, Sanath K. and Krulewski, Cameron and Liu, Yu Leon and Pacheco-Tallaj, Natalia and Thorngren, Ryan},
  title   = {The {S}mith Fiber Sequence and Invertible Field Theories},
  journal = {Communications in Mathematical Physics},
  year    = {2026},
  volume  = {407},
  number  = {25},
  doi     = {10.1007/s00220-025-05505-0},
}

@phdthesis{Gollinger16,
  author      = {Gollinger, William},
  title       = {Madsen--{T}illmann--{W}eiss spectra and a signature problem for manifolds},
  school      = {Westf{\"{a}}lische Wilhelms-Universit{\"{a}}t M{\"{u}}nster},
  year        = {2016},
  type        = {Dissertation (PhD)},
  url         = {https://nbn-resolving.org/urn:nbn:de:hbz:6-93229630152},
}

@article{CG17,
  author        = {Crowley, Diarmuid and Grant, Mark},
  title         = {The {P}oincar{\'e}--{H}opf {T}heorem for line fields revisited},
  year          = {2017},
  journal       = {Journal of Geometry and Physics},
  volume        = {117},
  pages         = {187--196},
  doi           = {10.1016/j.geomphys.2017.03.011},
}

@article{HSV25,
  author        = {Hoekzema, Renee S. and Stehouwer, Luuk and Vesel{\'a}, Simona},
  title         = {{SKK} groups of manifolds and non-unitary invertible {TQFTs}},
  year          = {2025},
  journal       = {arXiv preprint arXiv:2504.07917},
  archivePrefix = {arXiv},
  eprint        = {2504.07917},
  primaryClass  = {math.AT}
}

@article{BCT24,
  author = {Valentina Bais and Victor Gustavo May Custodio and Rafael Torres},
  title = {Existence results of $Spin(2, n-1)_0$-pseudo-Riemannian cobordisms},
  journal = {Revista de la Real Academia de Ciencias Exactas, Físicas y Naturales. Serie A. Matemáticas},
  volume = {118},
  year = {2024},
}

@article{Ebert13,
  title   = {A vanishing theorem for characteristic classes of odd-dimensional manifold bundles},
  author  = {Ebert, Johannes},
  journal = {Journal f{\"u}r die reine und angewandte Mathematik (Crelle's Journal)},
  volume  = {684},
  year    = {2013},
  pages   = {1--29},
  doi     = {10.1515/crelle-2012-0012}
}

@article{Reinhart63,
  title   = {Cobordism and the {E}uler number},
  author  = {Reinhart, Bruce L.},
  journal = {Topology},
  volume  = {2},
  issue   = {1},
  year    = {1963},
  pages   = {173--177},
  doi     = {10.1016/0040-9383(63)90031-4}
}

@article{BS14,
  title   = {A geometric interpretation of the homotopy groups of the cobordism category},
  author  = {B{\"o}kstedt, Marcel and Svane, Anne Marie},
  journal = {Algebraic \& Geometric Topology},
  volume  = {14},
  issue   = {3},
  year    = {2014},
  pages   = {1649--1676},
  doi     = {10.2140/agt.2014.14.1649}
}

@article{BDS15,
  title   = {Cobordism obstructions to independent vector fields},
  author  = {B{\"o}kstedt, Marcel and Dupont, Johan Louis and Svane, Anne Marie},
  journal = {Quarterly Journal of Mathematics},
  volume  = {66},
  issue   = {1},
  year    = {2015},
  pages   = {13--61},
  doi     = {10.1093/qmath/hau011}
}

@book{Rudyak98,
  author    = {Rudyak, Yuli B.},
  title     = {On {T}hom {S}pectra, Orientability, and {C}obordism},
  year      = {1998},
  publisher = {Springer-Verlag Berlin Heidelberg},
  address   = {Berlin, Heidelberg},
  series    = {Springer Monographs in Mathematics},
  doi       = {10.1007/978-3-662-03625-7},
  isbn      = {978-3-540-62043-3}
}

@book{Steenrod51,
  author    = {Steenrod, Norman},
  title     = {The {T}opology of {F}ibre {B}undles},
  year      = {1951},
  publisher = {Princeton University Press},
  address   = {Princeton, NJ},
  series    = {Princeton Mathematical Series},
  volume    = {14},
  isbn      = {978-0-691-00548-5}
}
\bibliographystyle{plain}

\end{document}